\newtheorem{theorem}{Theorem}[section]
\newtheorem{lemma}[theorem]{Lemma}
\newtheorem{proposition}[theorem]{Proposition}
\theoremstyle{definition}
\newtheorem{definition}[theorem]{Definition}
\theoremstyle{remark}
\newtheorem*{remark}{Remark}
\newcommand{\colim@}[2]{%
  \vtop{\m@th\ialign{##\cr
    \hfil$#1\operator@font colim$\hfil\cr
    \noalign{\nointerlineskip\kern1.5\ex@}#2\cr
    \noalign{\nointerlineskip\kern-\ex@}\cr}}%
}
\newcommand{\colim}{%
  \mathop{\mathpalette\colim@{\rightarrowfill@\textstyle}}\nmlimits@
}
\newcommand{\hocolim@}[2]{%
  \vtop{\m@th\ialign{##\cr
    \hfil$#1\operator@font hocolim$\hfil\cr
    \noalign{\nointerlineskip\kern1.5\ex@}#2\cr
    \noalign{\nointerlineskip\kern-\ex@}\cr}}%
}
\newcommand{\hocolim}{%
  \mathop{\mathpalette\hocolim@{\rightarrowfill@\textstyle}}\nmlimits@
}
\newcommand{\Hom}{\operatorname{Hom}}
\newcommand{\A}{\mathcal{A}}
\newcommand{\B}{\mathcal{B}}
\newcommand{\C}{\mathcal{C}}
\newcommand{\T}{\mathcal{T}}
\newcommand{\op}[1]{{#1}^{\operatorname{op}}}
\newcommand{\Mod}{\operatorname{-Mod}}
\newcommand{\hot}{\operatorname{Hot}}
\newcommand{\Ker}{\operatorname{Ker}}
\newcommand{\HH}{\mathsf{HH}}
\newcommand{\tow}[1]{\overset{#1}{\to}}
\newcommand{\Hm}[1]{\Hom_{#1}}
\newcommand{\ke}{k[\varepsilon]}
\newcommand{\Barv}{\operatorname{Bar}}
\newcommand{\kn}{k[t]/(t^{n+1})}
\newcommand{\cdef}{\operatorname{cDef}}
\newcommand{\Gr}{\operatorname{Gr}}
\title[Hochschild cohomology and curved Morita deformations]{Hochschild cohomology parametrizes curved Morita deformations}
\author{Alessandro Lehmann}
\address[Alessandro Lehmann]{Universiteit Antwerpen, Departement Wiskunde, Middelheimcampus,
Middelheimlaan 1,
2020 Antwerp, Belgium}
\email{alessandro.lehmann@uantwerpen.be}
\address{SISSA, Via Bonomea 265, 34136 Trieste TS, Italy}
\email{alehmann@sissa.it}
\thanks{
This project has received funding from the European Research Council (ERC) under the European Union’s Horizon 2020 research and innovation programme (grant agreement No. 817762).
}
\subjclass[2020]{	16E40  (Primary), 		18G70		16E45, 	18G80 (Secondary)}
\begin{document}
\maketitle
\maketitle
\section{Introduction}
Let $k$ be a field, and $A$ be a dg algebra over $k$. In \cite{MoritaDef}, Keller and Lowen defined a canonical map $\nu$ from the set of equivalence classes of \emph{Morita deformations} of $A$ -- that is, dg $\ke$-algebras $B_\varepsilon$ that come equipped with a Morita bimodule identifying the derived categories of $A$ and $B_\varepsilon\otimes_{\ke}^{\operatorname{L}} k$ -- to the second Hochschild cohomology of $A$. Surprisingly they observed that this map is, in general, neither injective nor surjective. The lack of surjectivity in particular is related to the existence of deformations that are intrinsically curved, i.e. whose curvature cannot be removed by passing to a Morita equivalent base algebra. Similar results were later obtained by Lurie in \cite{DAGX}; since curved objects are significantly harder to study with homological methods than their uncurved counterparts, this is usually called the \emph{curvature problem}. Positive results, showing that in some cases it is indeed possible to substitute a curved deformation with an uncurved one were obtained in \cite{LowenCurvature, Blanc_Katzarkov_Pandit_2018} for formal deformations of certain categories and in \cite{LOWEN2013441} for infinitesimal deformations of schemes. In this paper we show that, if one allows for curved deformations and considers them up to an appropriate notion of equivalence, the same map $\nu$ introduced in \cite{MoritaDef} becomes a bijection. Concretely, for us a curved Morita deformation is a $\ke$-free cdg $\ke$-algebra $B_\varepsilon$ whose reduction $B=B_\varepsilon\otimes_{\ke}k$ is a dg $k$-algebra equipped with a $B$-$A$ Morita bimodule; an equivalence of deformations $B_\varepsilon$ and $C_\varepsilon$ is an appropriately cofibrant cdg $B_\varepsilon$-$C_\varepsilon$ bimodule whose reduction is a Morita equivalence compatible with the equivalences to $A$. Note how, since we are dealing with (bi)modules over curved algebras, it does not make sense to ask for cofibrancy in the usual sense and a different condition has to be considered; this variation in the notion of equivalence of deformations is what leads to the unconditional injectivity result. This indeed solves the curvature problem for first order deformations: we interpret any Hochschild class as a curved deformation, and those have a meaningful homological interpretation via their $1$-derived category (introduced in \cite{Nder}). We also show that a cdg bimodule gives an equivalence of curved deformations precisely when it induces an equivalence between the respective $1$-derived categories.

The results of this paper fit into the larger picture explained in the introduction of \cite{Nder}: the bijection introduced here gives the upper arrow in the commutative diagram of isomorphisms \begin{equation*}\label{eqsquare}
    \xymatrix{{\HH^2(A)} \ar[d]_{\alpha} \ar[r]^-{\nu^{-1}} & {{\cdef_A(\ke)}} \ar[d]^{\beta} \\
    {\HH^2(D_{\mathrm{dg}}(A))} \ar[r]_-{\eta} & {\mathrm{Def}^{\mathrm{cat}}_{D_{\mathrm{dg}}(A)}(k[\epsilon])}
    }
\end{equation*}
where $\alpha$ is the characteristic morphism for Hochschild cohomology from \cite{lowchar} and $\beta$ assigns to a curved deformation its $1$-derived category. The arrow $\eta$ will be introduced in \cite{FutDef} as a bijection between the Hochschild cohomology of any pretriangulated dg category $\T$ and a set of appropriately defined categorical square zero extensions of $\T$.
The results in the present paper, other than giving the bijection $\nu$, show that the arrow $\beta$ in the diagram is well-defined; this highlights how the $1$-derived category, besides admitting an interpretation as a deformation of the derived category of the base, also detects equivalences of curved deformations. These facts, taken together, yield a particularly nice picture: given any Hochschild class we interpret it as a curved deformation, to which we associate its $1$-derived category; moreover this is a deformation of the derived category of the base, which corresponds appropriately to the starting class; the last statement is currently work in progress and will be fully developed in \cite{FutDef}.

Let us highlight that the novelty of our approach here consists in giving an explicit description in terms of algebras and bimodules of the set of curved deformations: indeed, we expect that our constructions could be generalized to describe the full groupoid of curved deformations over any local artinian (dg) algebra, and the resulting deformation functor (formal moduli problem, in the language of \cite{DAGX}) to be controlled by the full Hochschild complex. 
\vspace{8pt}

\emph{Acknowledgements.}
The author is thankful to Wendy Lowen for the continued support and the many suggestions in the redaction of this paper, and to Bernhard Keller for a helpful conversation. He also thanks the anonymous referee for highlighting some mistakes and imprecisions that were present in the original version.

\section{Curved Morita deformations}

\subsection{Cdg algebras and modules} We now give the basic definitions of cdg algebras and modules; for a more detailed introduction, see \cite{Nder} and the references therein. Let $R$ be a commutative ring. A cdg algebra $\A$ over $R$ is given by a graded $R$-algebra $\A^\#$ equipped with a derivation $d_\A\in \Hom_R(\A^\#,\A^\#)$ of degree $1$ and an element $c\in A^2$ such that $d_\A(c)=0$ and $d^2_\A=[c, -]$; the derivation $d$ is called predifferential and the element $c$ curvature of the algebra. A right cdg module over a cdg algebra $\A$ is a graded right $\A^\#$-module $M^\#$ equipped with an $\A^\#$-derivation $d_M\in \Hom_R(M^\#,M^\#)^1$ such that $d^2_M m= -mc$ for all $m\in M$; if $\A$ and $\B$ are cdg algebras, a cdg $\A$-$\B$ bimodule is a graded $\A$-$\B$ bimodule $M$ equipped with a degree $1$ derivation $d_M$ for which the identity $d_M^2m=c_\A m-m c_\B$ holds. For us, a morphism of cdg algebras is a morphism of graded algebras which preserves the curvature and commutes with the multiplications and predifferentials. Any morphism of cdg algebras $\A\to \B$ endows $\B$ with the structure of a cdg $\A$-$\B$ bimodule. Note that, unlike in the dg case, a cdg $\A$-$\B$ bimodule does not have an underlying structure of a cdg $\A$ or $\B$ module; in particular the cdg $\A$-bimodule $\A$ is not, in general, a cdg $\A$-module.
\subsection{Hochschild cohomology and Morita bimodules}
Let $A, B$ be dg algebras over $k$ and $X$ an $A$-$B$ bimodule. Recall that $X$ is said to be a (derived) Morita bimodule if the functor \[
-\otimes_{A}^{\operatorname{L}}X\colon D(A)\to D(B)
\] is an equivalence.
Following \cite{MoritaDef}, we will use the \emph{arrow category} $\mathfrak{c}_X$ associated to the bimodule $X$, which is a dg-category with two distinct objects $P$ and $Q$ and \[
\Hm{\mathfrak{c}_X}(P,P)=A, \, \Hm{\mathfrak{c}_X}(Q,Q)=B, \, \Hm{\mathfrak{c}_X}(Q,P)=X \text{ and }\Hm{\mathfrak{c}_X}(P,Q)=0.
\] with composition defined in the obvious way. The category $\mathfrak{c}_X$ comes equipped with two fully faithful dg functors \[
j\colon A \to \mathfrak{c}_X \text{ and }i \colon B \to \mathfrak{c}_X
\] where $A$ and $B$ are seen as dg-categories with one object, which are identified respectively with $P$ and $Q$.
In \cite{KellerInvariance}, Keller introduced for any Morita bimodule $X$ a bijection\footnote{In fact $\varphi_X$ is shown to give an isomorphism in the homotopy category of $B_\infty$ algebras between the Hochschild complexes, but we'll only need the result at the homotopy level.} \[
\varphi_X\colon \HH^\bullet(B)\to \HH^\bullet(A)
\] between the Hochschild cohomologies of $B$ and $A$ which is functorial with respect to the (derived) tensor product of bimodules. By construction of $\varphi_X$, the diagram
\begin{equation*}\label{phiX}
\begin{tikzcd}
	& {\HH^\bullet(B)} \\
	{\HH^\bullet(\mathfrak{c}_X)} \\
	& {\HH^\bullet(A)}
	\arrow["{\varphi_X}", from=1-2, to=3-2]
	\arrow["{i^*}", from=2-1, to=1-2]
	\arrow["{j^*}"', from=2-1, to=3-2]
\end{tikzcd}    
\end{equation*}
is a commutative diagram of isomorphisms, where $j^*, i^*$ are the maps induced by the restrictions along the fully faithful functors $A\to \mathfrak{c}_{X}$ and $B\to \mathfrak{c}_{X}$.
\subsection{Curved Morita deformations}
Denote with $\ke$ the algebra of the dual numbers. Let $A$ be a dg algebra over $k$. In the following, we will just say $\ke$-free to mean free as a graded $\ke$-module.
\begin{definition}
A curved Morita deformation of $A$ is  a $\ke$-free cdg $\ke$-algebra $B_\varepsilon$ equipped with, setting $B:=B_\varepsilon\otimes_{\ke}k$, a $B$-$A$ Morita bimodule $X$. Two curved deformations $(B_\varepsilon, X)$ and $(C_\varepsilon, Y)$ are equivalent if there exists a  cdg $B_\varepsilon$-$C_\varepsilon$ bimodule $Z_\varepsilon$ that is free as a graded $B_\varepsilon$-module and as a graded $C_\varepsilon$-module (in particular, it is $\ke$-free) and such that, setting $C=C_\varepsilon\otimes_{\ke} k$, the $B$-$C$ bimodule $Z=Z_\varepsilon \otimes_{\ke} k$ is cofibrant as a bimodule and there exists an isomorphism $X\cong Z\otimes_C Y$ in the derived category of $B$-$A$ bimodules.
\end{definition}
 We will denote with $\cdef_A(\ke)$ the set of curved Morita deformations of $A$ up to equivalence. 
\begin{lemma}
    Equivalence of curved deformations is an equivalence relation, so the set $\cdef_A(\ke)$ is well defined.
\end{lemma}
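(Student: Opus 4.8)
The plan is to verify the three properties of an equivalence relation, with reflexivity and symmetry being essentially formal and transitivity carrying the real content. For reflexivity, I would take $Z_\varepsilon = C_\varepsilon$ viewed as a cdg $C_\varepsilon$-$C_\varepsilon$ bimodule via the identity; it is free as a graded right $C_\varepsilon$-module, its reduction $Z = C$ is cofibrant as a $C$-$C$ bimodule (the free bimodule is cofibrant), and the required quasi-isomorphism $Y \cong Y \otimes_C C$ is the canonical one. This shows $(C_\varepsilon, Y) \sim (C_\varepsilon, Y)$.

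For symmetry, given an equivalence datum $Z_\varepsilon$ from $(B_\varepsilon, X)$ to $(C_\varepsilon, Y)$ with $X \cong Y \otimes_C Z$, I would produce an inverse cdg $C_\varepsilon$-$B_\varepsilon$ bimodule. Since $Z_\varepsilon$ is free as a graded $C_\varepsilon$-module, one can form $Z_\varepsilon^\vee := \iHom_{C_\varepsilon}(Z_\varepsilon, C_\varepsilon)$ (or the appropriate left/right dual), which will be free as a graded $B_\varepsilon$-module, and whose reduction is quasi-isomorphic to the bimodule dual of $Z$; since $Z$ is a cofibrant bimodule inducing an equivalence $D(C) \to D(B)$ (its reduction comes from a deformation of a Morita equivalence, so it is itself a Morita equivalence), its dual inverts it up to quasi-isomorphism. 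One then checks $Y \cong X \otimes_B Z^\vee$ follows from $X \cong Y \otimes_C Z$ by tensoring and using the duality, possibly after replacing $Z^\vee$ by a cofibrant resolution of its reduction while keeping the lift free over $C_\varepsilon$ — here a little care is needed because the definition asks the reduction to be literally cofibrant, not just quasi-isomorphic to a cofibrant bimodule, so the construction of the inverse datum should be arranged to land on the nose or be corrected by a standard cofibrant-replacement argument that is compatible with reduction.

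For transitivity, given $Z_\varepsilon$ realizing $(B_\varepsilon, X) \sim (C_\varepsilon, Y)$ and $W_\varepsilon$ realizing $(C_\varepsilon, Y) \sim (D_\varepsilon, T)$, the natural candidate is $Z_\varepsilon \otimes_{C_\varepsilon} W_\varepsilon$ as a cdg $B_\varepsilon$-$D_\varepsilon$ bimodule. Freeness as a graded $D_\varepsilon$-module follows from freeness of $W_\varepsilon$ over $D_\varepsilon$ together with freeness of $Z_\varepsilon$ over $C_\varepsilon$ (a free module over $C_\varepsilon$ tensored with a free $C_\varepsilon$-$D_\varepsilon$ bimodule is free over $D_\varepsilon$). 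For the reduction: $(Z_\varepsilon \otimes_{C_\varepsilon} W_\varepsilon) \otimes_{\ke} k \cong Z \otimes_C W$, and since $Z_\varepsilon$ is $C_\varepsilon$-free the derived tensor product agrees with the underived one, so no Tor-correction appears; one then needs $Z \otimes_C W$ to be cofibrant as a $B$-$D$ bimodule, which holds because $W$ is cofibrant and $Z$ is cofibrant (so $Z \otimes_C W$ is a tensor product of cofibrant bimodules over the dg algebra $C$, hence cofibrant — or at worst replace by a cofibrant resolution). Finally the compatibility of Morita bimodules chains: $X \cong Y \otimes_C Z$ and $Y \cong T \otimes_D W$ give $X \cong (T \otimes_D W) \otimes_C Z \cong T \otimes_D (W \otimes_C Z)$ by associativity.

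The main obstacle I expect is the bookkeeping around the word \emph{cofibrant} in the definition: because it is stated as a strict condition on the reduced bimodule rather than an up-to-quasi-isomorphism condition, at each of the three steps (most acutely symmetry and transitivity) one must ensure the naturally constructed lift $Z_\varepsilon$ has a reduction that is genuinely cofibrant, or else argue that replacing it by a cofibrant resolution can be done $\ke$-freely. This is where one uses that $\ke$ is the dual numbers: a graded $\ke$-free module is a square-zero extension, so a cofibrant replacement of the reduction can be lifted to a $\ke$-free (indeed right-$C_\varepsilon$-free, when needed) cdg bimodule by a standard obstruction-free lifting argument, since the obstructions live in groups that vanish for cofibrant objects. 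Everything else is a routine manipulation of (graded, derived) tensor products and the associativity and unit constraints.
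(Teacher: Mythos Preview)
Your assessment of where the difficulty lies is inverted relative to the paper: reflexivity and transitivity are treated as routine, and \emph{symmetry} is singled out as the nontrivial step. (On reflexivity: your justification that $C$ is cofibrant as a bimodule because it is ``the free bimodule'' is incorrect---$C$ is free as a one-sided module but not as a $C\otimes_k \op{C}$-module, and the diagonal is not cofibrant in general. The paper glosses over this too, so it is minor, but the stated reason is wrong.)

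The substantive gap is in your symmetry argument. The claim that $Z_\varepsilon^\vee = \iHom_{C_\varepsilon}(Z_\varepsilon, C_\varepsilon)$ is free as a graded right $B_\varepsilon$-module has no justification: graded $C_\varepsilon$-freeness of $Z_\varepsilon$ says nothing about the $B_\varepsilon$-module structure of its $C_\varepsilon$-dual, and for infinite rank the dual is a product rather than a sum even on the $C_\varepsilon$ side. Your fallback---lift a cofibrant replacement of the reduced inverse bimodule to a graded-free cdg bimodule by an ``obstruction-free'' argument---points in the right direction, but this is precisely where the paper does nontrivial work by a completely different route. The paper discards any direct dual construction and instead argues via Hochschild cohomology: starting from a cofibrant inverse $W$ to $Z$ over $k$, it uses the arrow category $\mathfrak{c}_W$ and the compatibility $\varphi_W = \varphi_Z^{-1}$ (together with Proposition~\ref{welldef}) to see that the cocycles defining $B_\varepsilon$ and $C_\varepsilon$ come from a common class in $\HH^2(\mathfrak{c}_W)$; then, \emph{forward-referencing the proof of injectivity} (Proposition~\ref{injective}), this class is realized by a $cA_\infty$ deformation of $\mathfrak{c}_W$, and the resulting bimodule is rectified to a graded-$B_\varepsilon$-free cdg bimodule via the bar/Koszul construction. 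So the lifting you label ``standard'' is, in this curved setting, the actual content of the later propositions rather than a preliminary formality.
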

\begin{proof}
     Transitivity is easy, since if a deformation $B_\varepsilon$ is equivalent via $X_\varepsilon$ to $C_\varepsilon$ and $C_\varepsilon$ is equivalent to $D_\varepsilon$ via $Y_\varepsilon$ then one can check that $X_\varepsilon\otimes_{C_\varepsilon} Y_\varepsilon$ gives an equivalence between $B_\varepsilon$ and $D_\varepsilon$.
    Symmetry is less immediate: let $Z_\varepsilon$ be a morphism between deformations $B_\varepsilon$ and $C_\varepsilon$. Then $Z$ is a Morita bimodule, thus there exists a cofibrant $C$-$B$ Morita bimodule $W$ such that $Z\otimes_C W\cong B$ and $W\otimes_B Z\cong C$. Denoting with $\mathfrak{c}_W$ the arrow category of $W$, by definition of $\varphi_W$ there is a commutative diagram of isomorphisms 
\[\begin{tikzcd}
	& {\HH^2(C)} \\
	{\HH^2(\mathfrak{c}_W)} \\
	& {\HH^2(B).}
	\arrow["{\varphi_W}", from=1-2, to=3-2]
	\arrow["{i^*}", from=2-1, to=1-2]
	\arrow["{j^*}"', from=2-1, to=3-2]
\end{tikzcd}\]
Since $\varphi_W$ is an inverse to $\varphi_Z$ and by Proposition \ref{welldef} the map $\varphi_Z$ carries the class that defines $B_\varepsilon$ to the one defining $C_\varepsilon$, reasoning exactly as in the proof of Proposition \ref{injective} we can conclude that there exists a graded $B_\varepsilon$-free and $C_\varepsilon$-free $C_\varepsilon$-$B_\varepsilon$ cdg bimodule $\hat{W_\varepsilon}$ such that its reduction $\hat{W}$ is cofibrant and isomorphic in the derived category of bimodules to $W$, which is a Morita equivalence. We have thus proved that equivalence of curved deformations is a symmetric relation. Reflexivity is proven in the same way: for any deformation $B_\varepsilon$ we have the cdg $B_\varepsilon$-bimodule $B_\varepsilon$ whose reduction $B$ is manifestly a Morita bimodule; resolving this via the same procedure as before, we are done.
\end{proof}
\begin{remark}
It is clear that the set $\cdef_A(\ke)$ is really just the set of connected components of a (higher) groupoid. A satisfactory construction of the whole groupoid would require a generalization to curved categories of the Morita theory of \cite{toen_derived_morita} where the weak equivalences are, at least on $\ke$-free categories, the morphisms inducing weak equivalence on the (uncurved) reductions. In this generalization, the mapping spaces would be given by (an enhancement of) some version of the semiderived category of bimodules, following constructions in \cite{Positselski_2018}; indeed our condition of graded $C_\varepsilon$-freeness and cofibrancy of the reduction is implied by the natural cofibrancy condition for the semiderived category of \cite{Positselski_2018}. This is also related to the $1$-derived category of \cite{Nder}, since its subcategory given by the $\ke$-free modules coincides with the semiderived category. At the moment, this theory does not exist so we have to define our objects ``by hand''. In particular, we have no intrinsic definition of the left derived functor of the reduction $-\otimes_{\ke}k$ from (weakly) curved dg $\ke$-algebras to dg $k$-algebras; we solve this issue by restricting to $\ke$-free algebras to begin with.
\end{remark}

\subsection{Curved deformations and Hochschild cohomology}
It is well-known (see \cite{lowchar, MoritaDef}) that there is a correspondence between the set of Hochschild 2-cocycles and that of deformations of $A$ as a $cA_\infty$ algebra, i.e. $\ke$-free $cA_\infty$ algebras $A_\varepsilon$ equipped with an isomorphism of dg algebras $A_\varepsilon\otimes_{\ke}k\cong A$. Define a map \[\nu \colon \cdef_A(\ke)\to \HH^2(A)\] in the same way as in \cite{MoritaDef}: if $(B_\varepsilon, X)$ is a curved deformation of $A$, then $B_\varepsilon$ is a $cA_\infty$ deformation of $B$, so defines a cocycle $\eta$ and a class $[\eta]$ in $\HH^2(B)$. The class $\nu(B_\varepsilon)\in \HH^2(A)$ is given by definition by $\varphi_X([\eta])$.
\begin{proposition}\label{welldef}
The map $\nu$ is well defined.
\end{proposition}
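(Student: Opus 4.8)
The assertion is that $\nu$ descends to equivalence classes; that it is unambiguous on a single pair $(B_\varepsilon, X)$ is immediate, since the class $[\eta]\in\HH^2(B)$ attached to the $cA_\infty$ deformation $B_\varepsilon$ is itself well defined and $\varphi_X$ is a map of Hochschild cohomologies. So the plan is to fix equivalent deformations $(B_\varepsilon, X)$ and $(C_\varepsilon, Y)$ and to prove $\varphi_X([\eta_B])=\varphi_Y([\eta_C])$, where $\eta_B$ and $\eta_C$ are the cocycles attached to $B_\varepsilon$ and $C_\varepsilon$. Unwinding the equivalence, fix a cdg $B_\varepsilon$-$C_\varepsilon$ bimodule $Z_\varepsilon$ that is $\ke$-free, with cofibrant reduction $Z=Z_\varepsilon\otimes_{\ke}k$, together with a quasi-isomorphism $X\cong Y\otimes_C Z$. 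Since $X\cong Y\otimes_C Z$ with both $X$ and $Y$ Morita bimodules, $Z$ is a Morita bimodule as well, so $\varphi_Z$ is available; and the functoriality of $\varphi$ under derived tensor products (using the cofibrancy of $Z$ to see that $Y\otimes_C Z$ already computes the derived tensor product) gives $\varphi_X=\varphi_Y\circ\varphi_Z$. The whole statement therefore reduces to the single identity $\varphi_Z([\eta_B])=[\eta_C]$ in $\HH^2(C)$.

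To establish this I would introduce the \emph{curved arrow category} $\mathfrak{c}_{Z_\varepsilon}$ of $Z_\varepsilon$: the $\ke$-linear cdg category with two objects $P, Q$ (equivalently, as it has finitely many objects, the $\ke$-linear cdg algebra with orthogonal idempotents $e_P, e_Q$) for which $\Hom(P,P)=B_\varepsilon$, $\Hom(Q,Q)=C_\varepsilon$, $\Hom(P,Q)=Z_\varepsilon$ and $\Hom(Q,P)=0$, with composition induced by the bimodule structure and curvature $(c_{B_\varepsilon},c_{C_\varepsilon})$. The cdg-category axioms on the mixed hom-space are precisely the cdg-bimodule axioms for $Z_\varepsilon$, so this is well posed; moreover $\mathfrak{c}_{Z_\varepsilon}$ is $\ke$-free, and since $B$ and $C$ are honest (uncurved) dg algebras its reduction is the ordinary arrow category $\mathfrak{c}_Z$ of the dg bimodule $Z$. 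Hence $\mathfrak{c}_{Z_\varepsilon}$ is a $cA_\infty$ deformation (in fact a cdg one) of $\mathfrak{c}_Z$, and by the deformation-cocycle correspondence recalled above it determines a class $\theta\in\HH^2(\mathfrak{c}_Z)$.

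The key step, and the one I expect to require the most care, is to match $\theta$ with the restrictions along the fully faithful inclusions $i\colon B\to\mathfrak{c}_Z$ and $j\colon C\to\mathfrak{c}_Z$ of the two objects: I would show $i^*(\theta)=[\eta_B]$ and $j^*(\theta)=[\eta_C]$. This is a naturality statement for the deformation-cocycle correspondence with respect to inclusions of the corners $e_P\mathfrak{c}_Z e_P=B$ and $e_Q\mathfrak{c}_Z e_Q=C$: restricting the deformation $\mathfrak{c}_{Z_\varepsilon}$ to the $e_P$-corner returns $B_\varepsilon$ verbatim, and similarly the $e_Q$-corner returns $C_\varepsilon$. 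To turn this tautology into an equality of cohomology classes one has to verify that the auxiliary choices entering the construction can be made compatibly: a representing cocycle for $\mathfrak{c}_{Z_\varepsilon}$ depends only on a $k$-linear splitting of the reduction $\mathfrak{c}_{Z_\varepsilon}\twoheadrightarrow\mathfrak{c}_Z$, and such a splitting may be taken to be the direct sum of splittings of $B_\varepsilon\twoheadrightarrow B$, $C_\varepsilon\twoheadrightarrow C$ and $Z_\varepsilon\twoheadrightarrow Z$; for that choice the $e_P$-restriction of the cocycle representing $\theta$ is literally the chosen cocycle $\eta_B$, and similarly $\eta_C$ on the $e_Q$-corner. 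So the work here is careful bookkeeping with the conventions of the cited constructions rather than any genuine computation.

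Granting this, I would finish by invoking Keller's commutative triangle of isomorphisms attached to the Morita bimodule $Z$ (the diagram displayed above in the discussion of $\varphi_X$, with $X$ replaced by $Z$), which asserts that $\varphi_Z\circ i^*=j^*$ as maps $\HH^\bullet(\mathfrak{c}_Z)\to\HH^\bullet(C)$. Evaluating on $\theta$ gives $\varphi_Z([\eta_B])=\varphi_Z(i^*\theta)=j^*\theta=[\eta_C]$, and combined with $\varphi_X=\varphi_Y\circ\varphi_Z$ this yields $\nu(B_\varepsilon)=\varphi_X([\eta_B])=\varphi_Y([\eta_C])=\nu(C_\varepsilon)$, which is the required independence of the representative.
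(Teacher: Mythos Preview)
Your argument is correct and is essentially a faithful reconstruction of the proof the paper defers to, namely \cite[Proposition 3.3]{MoritaDef}: reduce to $\varphi_Z([\eta_B])=[\eta_C]$ via functoriality $\varphi_X=\varphi_Y\circ\varphi_Z$, build the curved arrow category $\mathfrak{c}_{Z_\varepsilon}$ as a cdg deformation of $\mathfrak{c}_Z$, read off the corner restrictions, and conclude from Keller's triangle. The paper itself simply writes that the proof of \cite[Proposition 3.3]{MoritaDef} applies verbatim, so there is nothing further to compare.
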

\begin{proof}
The proof of \cite[Proposition 3.3]{MoritaDef} applies verbatim.
\end{proof}
\begin{proposition}
    The map $\nu$ is surjective.
\end{proposition}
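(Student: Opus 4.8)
The plan is to realize each class $c\in\HH^2(A)$ by a genuine cdg algebra after replacing $A$ by a quasi-free model, where the curvature obstruction observed in \cite{MoritaDef} disappears. Fix $c\in\HH^2(A)$; by the correspondence recalled above, a choice of representing Hochschild $2$-cocycle produces a $cA_\infty$ deformation of $A$, and the point is precisely that this need not be gauge-equivalent to a strict cdg algebra. I would sidestep this by passing to a cofibrant model of $A$.

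Choose a quasi-free (equivalently, cofibrant) dg $k$-algebra $A'=(T_kV,d)$ together with a quasi-isomorphism $f\colon A'\to A$ — for instance $A'=\Omega\Barv A$. Since $f$ is a quasi-isomorphism it induces an isomorphism $f_*\colon\HH^\bullet(A')\xrightarrow{\sim}\HH^\bullet(A)$, and I set $c':=f_*^{-1}(c)$. Now I would use that a quasi-free algebra is homologically smooth: the fundamental sequence $0\to A'\otimes V\otimes A'\to A'\otimes A'\to A'\to 0$ is a length-one semifree resolution of the diagonal bimodule, so $\HH^\bullet(A')$ is already computed by the two-term complex $[A'\to\Hom_k(V,A')]$. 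Restricting Hochschild cochains along this resolution identifies its cocycles with the Hochschild $2$-cocycles of $A'$ of arity $\le 1$, i.e.\ with pairs $(w,\psi)$ where $w\in (A')^2$ is closed and $\psi$ is a degree-$1$ derivation of $A'$ with $[d,\psi]=[w,-]$. But such a pair is exactly the datum of a strict cdg structure on $A'\otimes_k\ke$ with differential $d+\varepsilon\psi$, curvature element $\varepsilon w$, and undeformed multiplication (closedness of $w$ and the relation $[d,\psi]=[w,-]$ are precisely the cdg axioms at order $\varepsilon$). Choosing $(w,\psi)$ representing $c'$ thus produces a strict cdg $\ke$-algebra $B_\varepsilon$ which is $\ke$-free — its underlying graded module is $A'\otimes_k\ke$ — whose reduction is $B:=B_\varepsilon\otimes_{\ke} k=A'$, and whose $cA_\infty$ deformation class in $\HH^2(B)=\HH^2(A')$ is $c'$.

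It remains to equip $B=A'$ with a Morita bimodule to $A$ and to compute $\nu$. Take for $X$ the module $A$ with its standard right $A$-action and with left $A'$-action pulled back along $f$; since $f$ is a quasi-isomorphism, $X\otimes^{\operatorname{L}}_{A'}-\colon D(A')\to D(A)$ is an equivalence, so $(B_\varepsilon,X)$ is a curved Morita deformation of $A$. By definition $\nu(B_\varepsilon,X)=\varphi_X(c')$, and because $X$ is $A$ pulled back along a quasi-isomorphism one checks, using the functoriality of $\varphi$ with respect to derived tensor products together with $\varphi$ of the diagonal bimodule being the identity, that $\varphi_X=f_*$; equivalently, in the arrow-category triangle for $\mathfrak c_X$ both fully faithful inclusions into $\mathfrak c_X$ induce isomorphisms on $\HH^\bullet$, and the resulting comparison is $f_*$. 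Hence $\nu(B_\varepsilon,X)=f_*(c')=c$, which proves surjectivity.

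The steps that require genuine care are homological bookkeeping: that the length-one resolution really makes $\HH^\bullet(A')$ computable by the small complex, that this identification carries the $cA_\infty$ deformation class of $B_\varepsilon$ to $c'$ on the nose, and that $\varphi_X$ agrees with $f_*$. The conceptual content is short — over a quasi-free model every second Hochschild class is realized by a strict cdg deformation, whose curvature $\varepsilon w$ is nonzero exactly for the classes $c$ that cannot be realized by an uncurved deformation — and it makes transparent why passing to a Morita-equivalent base algebra is what removes the obstruction of \cite{MoritaDef}.
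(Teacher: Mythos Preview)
Your argument is correct and gives a genuinely different route to surjectivity than the paper's. The paper does not pass to a cofibrant model of $A$; instead it keeps the $cA_\infty$ deformation $A_\varepsilon$ determined by the chosen cocycle and strictifies it via the curved Yoneda embedding of \cite{FilteredAinf}: the cdg algebra $B_\varepsilon=Y(A_\varepsilon)=\Hom_{A_\varepsilon\Mod}(A_\varepsilon,A_\varepsilon)$, taken in the cdg category of $qA_\infty$ modules, is automatically $\ke$-free, and the Yoneda map $A_\varepsilon\to Y(A_\varepsilon)$ reduces modulo $\varepsilon$ to the ordinary $A_\infty$ Yoneda embedding $A\to Y(A_\varepsilon)_0$, a quasi-isomorphism. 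The arrow-category triangle then shows $\nu(Y(A_\varepsilon))=[\eta]$.

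Your approach trades the curved Yoneda machinery for the elementary observation that over a semifree model the length-one resolution of the diagonal forces every Hochschild $2$-class to have a representative with vanishing components of arity $\ge 2$ --- i.e.\ an honest cdg (not merely $cA_\infty$) cocycle. This is more explicit, leaves the underlying graded algebra and multiplication undeformed, and makes the role of the curvature term $\varepsilon w$ transparent; the paper's construction is more uniform in that it is functorial and avoids choosing a model. Two small caveats on your write-up: ``quasi-free'' and ``cofibrant'' are not literally equivalent, and for the two-term complex to compute $\HH^\bullet(A')$ you need $\Omega^1_{A'}\cong A'\otimes V\otimes A'$ to be h-projective as a bimodule --- both points are unproblematic for a \emph{semifree} model such as $A'=\Omega\Barv A$, which is the example you give. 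The identification $\varphi_X=f_*$ for $X={}_fA$ is indeed part of Keller's functoriality package and causes no trouble.
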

\begin{proof}
Let $[\eta]\in \HH^2(A)$ be an Hochschild class; then any cocycle $\eta$ representing the class defines a $cA_\infty$ deformation $A_\varepsilon$ of $A$. Assume for now that there exists a $\ke$-free cdg algebra $B_\varepsilon$ equipped with a $cA_\infty$ $B_\varepsilon$-$A_\varepsilon$ bimodule $Z_\varepsilon$ such that the reduction $Z$ is a Morita $B$-$A$ bimodule (in particular, cannot have higher components). It is clear that $(B_\varepsilon, Z)$ is an element of $\cdef_A(\ke)$. By definition of $\varphi_Z$, the diagram
\[\begin{tikzcd}
	& {\HH^2(B)} \\
	{\HH^2(\mathfrak{c}_Z)} \\
	& {\HH^2(A)}
	\arrow["{\varphi_Z}", from=1-2, to=3-2]
	\arrow["{i^*}", from=2-1, to=1-2]
	\arrow["{j^*}"', from=2-1, to=3-2]
\end{tikzcd}\]
commutes, and $\mathfrak{c}_{Z_\varepsilon}$ defines a $cA_\infty$ deformation of $\mathfrak{c}_Z$ which defines an Hochschild cocycle $ \mu\in \HH^2(\mathfrak{c}_Z)$. Moreover by construction, $j^*[\mu]=[\eta]$; therefore, $\nu(B_\varepsilon)=j^* [\mu]=[\eta]$. We are left to prove that such $B_\varepsilon$ exists. Consider the cdg algebra $\mathcal{Y}(A_\varepsilon)$ given as the image via the curved Yoneda embedding (see \cite{FilteredAinf} for a complete description of the curved Yoneda embedding and of the category of $qA_\infty$ modules) of the $cA_\infty$ algebra $A_\varepsilon$. This is isomorphic to the cdg algebra $\Hm{A_\varepsilon\Mod}(A_\varepsilon, A_\varepsilon)$ where the hom is taken in the cdg category of $qA_\infty$ $A_\varepsilon$-modules. Since $A_\varepsilon$ is $\ke$-free, the same holds for $\mathcal{Y}(A_\varepsilon)$ since, as a graded $\ke$-module, it is a product of homs between $\ke$-free modules. There is a natural map of $cA_\infty$ algebras $A_\varepsilon \tow{\mathcal{Y}} \mathcal{Y}(A_\varepsilon)$ given by the curved Yoneda embedding. Its higher components are killed by reduction, so setting $\mathcal{Y}(A_\varepsilon)_0:=\mathcal{Y}(A_\varepsilon)\otimes_{\ke}k$, we have a morphism of dg algebras $A\tow{\mathcal{Y}_0} \mathcal{Y}(A_\varepsilon)_0$. This coincides with the $A_\infty$ Yoneda embedding which, by \cite[Theorem 4.15]{FilteredAinf} is a quasi-isomorphism and we are done.
\end{proof}

\begin{proposition}\label{injective}
    The map $\nu$ is injective.
\end{proposition}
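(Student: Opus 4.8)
The plan is to show that $\nu(B_\varepsilon,X)=\nu(C_\varepsilon,Y)$ forces the two deformations to be equivalent, by producing the equivalence bimodule as a rectified off-diagonal block of a curved deformation of a suitable arrow category; the lifting statement obtained along the way is exactly what the symmetry part of the previous lemma invokes. Write $[\eta_B]\in\HH^2(B)$ and $[\eta_C]\in\HH^2(C)$ for the classes of the $cA_\infty$ deformations $B_\varepsilon$ and $C_\varepsilon$ of their reductions, so that by definition of $\nu$ the hypothesis reads $\varphi_X[\eta_B]=\varphi_Y[\eta_C]$. First I would choose a cofibrant $B$-$C$ Morita bimodule $W$ realizing the composition of the equivalence induced by $X$ with a quasi-inverse of the one induced by $Y$, so that $Y\otimes^{\operatorname L}_C W\simeq X$ (this uses only that $Y$ is Morita); by functoriality of $\varphi$ under derived tensor and bijectivity of $\varphi_Y$, the hypothesis becomes $\varphi_W[\eta_B]=[\eta_C]$. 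It then suffices to build a cdg $B_\varepsilon$-$C_\varepsilon$ bimodule $Z_\varepsilon$ that is free as a graded $C_\varepsilon$-module and whose reduction $Z$ is cofibrant and quasi-isomorphic to $W$: such a $Z_\varepsilon$ automatically satisfies $Y\otimes_C Z\simeq X$ (as $Z$ is cofibrant and quasi-isomorphic to $W$), hence exhibits an equivalence $(B_\varepsilon,X)\sim(C_\varepsilon,Y)$.

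To build $Z_\varepsilon$, form the arrow category $\mathfrak{c}_W$, with objects $P,Q$ and inclusions $i\colon B\hookrightarrow\mathfrak{c}_W$, $j\colon C\hookrightarrow\mathfrak{c}_W$. The restriction $i^*$ is an isomorphism on Hochschild cohomology and is surjective on cochains, so I can pick a Hochschild $2$-cocycle $\mu$ of $\mathfrak{c}_W$ whose restriction along $i$ is precisely the cocycle defining the cdg algebra $B_\varepsilon$ (possible since $i^*$ is onto on cochains and a quasi-isomorphism). The $cA_\infty$ deformation $(\mathfrak{c}_W)_\varepsilon$ determined by $\mu$ then has endomorphism block $\operatorname{End}(P)=B_\varepsilon$ on the nose; its block $\operatorname{End}(Q)$ is a $cA_\infty$ deformation of $C$ of class $j^*[\mu]=\varphi_W[\eta_B]=[\eta_C]$; and its off-diagonal block is a $cA_\infty$ $B_\varepsilon$-$\operatorname{End}(Q)$ bimodule deforming $W$. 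By the well-known correspondence between Hochschild classes and $cA_\infty$ deformations over $\ke$ up to equivalence, the block $\operatorname{End}(Q)$ is $cA_\infty$-isomorphic, as a deformation of $C$, to $C_\varepsilon$; restricting the off-diagonal block along the inverse of this isomorphism gives a $cA_\infty$ $B_\varepsilon$-$C_\varepsilon$ bimodule $W_\varepsilon$ which is $\ke$-free (its underlying graded module is $W\otimes_k\ke$) and has reduction $W$. Finally I would rectify $W_\varepsilon$ to a strict cdg $B_\varepsilon$-$C_\varepsilon$ bimodule: the (curved) two-sided bar resolution $Z_\varepsilon:=\Barv(B_\varepsilon,B_\varepsilon,W_\varepsilon,C_\varepsilon,C_\varepsilon)$ is free as a graded right $C_\varepsilon$-module, hence $\ke$-free, and its reduction $\Barv(B,B,W,C,C)$ is a cofibrant $B$-$C$ bimodule quasi-isomorphic to $W$; alternatively one can rectify via the curved Yoneda embedding, exactly as in the proof of surjectivity. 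This $Z_\varepsilon$ is the sought equivalence, and injectivity follows.

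The step I expect to be the main obstacle is the rectification, in two respects. First, organizing the deformation of $\mathfrak{c}_W$ so that both diagonal blocks are literally the given algebras $B_\varepsilon$ and $C_\varepsilon$ — this is where the equality $\nu(B_\varepsilon,X)=\nu(C_\varepsilon,Y)$ is used essentially, since without it the $Q$-block would be an inequivalent deformation of $C$ and could not be matched to $C_\varepsilon$ — together with the passage from the resulting $cA_\infty$ bimodule to a strict cdg one that stays $\ke$-free, is graded $C_\varepsilon$-free, and retains the correct reduction up to quasi-isomorphism; making the curved bar (or curved Yoneda) construction accomplish all of this at once is the technical heart, and is handled by the machinery of \cite{Positselski_2018, FilteredAinf}. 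Second, as in the surjectivity argument, there is bookkeeping: the identity $Y\otimes^{\operatorname L}_C W\simeq X$, the tensor identities for a quasi-inverse of $Y$, and the functoriality of $\varphi$ all have to be invoked in the derived sense, which is why $W$ and the resolutions appearing in the construction are taken cofibrant and $\ke$-free from the start.
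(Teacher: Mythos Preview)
Your argument is correct and follows the same overall route as the paper: pick a cofibrant Morita $B$-$C$ bimodule, deform its arrow category by a Hochschild $2$-cocycle, read off the off-diagonal block as a $cA_\infty$ bimodule, then rectify via a (curved) bar construction. The one substantive difference is in how you arrange the diagonal blocks of the deformed arrow category to be \emph{exactly} $B_\varepsilon$ and $C_\varepsilon$. You lift only the $B$-cocycle on the nose (using surjectivity of $i^*$ on cochains together with its being a quasi-isomorphism), observe that the $Q$-block then has the right Hochschild class, and transport along the resulting $cA_\infty$ gauge isomorphism $C_\varepsilon\simeq\operatorname{End}(Q)$. The paper instead invokes \cite[Lemma~3.8]{MoritaDef} to lift \emph{both} cocycles simultaneously to a single cocycle on $\mathfrak{c}_W$, so that the deformed arrow category already has diagonal blocks literally equal to $B_\varepsilon$ and $C_\varepsilon$; this avoids the transport step entirely. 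Your rectification is essentially the same as the paper's: the paper phrases it as the one-sided cobar--bar over $E_\varepsilon=\op{B_\varepsilon}\otimes_{\ke}C_\varepsilon$ following \cite{Positselski_2011}, which amounts to your two-sided bar and yields the same graded-$C_\varepsilon$-freeness and cofibrant reduction. (Your suggested alternative of rectifying via the curved Yoneda embedding, ``as in surjectivity'', is not quite parallel, since there one rectifies an algebra rather than a bimodule.)
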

\begin{proof}
    Suppose that $(B_\varepsilon, X)$ and $(C_\varepsilon, Y)$ are cdg deformations of $A$ such that $\nu(B_\varepsilon)=\nu(C_\varepsilon)$. Let $Z$ be a cofibrant Morita $B$-$C$ bimodule such that there is an isomorphism $Z\otimes_C Y\cong X$ in the derived category -- such a bimodule always exists by the standard Morita theory of dg-algebras, see e.g. the proof of \cite[Proposition 3.7]{MoritaDef}. Then by definition of $\varphi_Z$ the diagram
\[\begin{tikzcd}
	& {\HH^2(B)} \\
	{\HH^2(\mathfrak{c}_Z)} && {\HH^2(A)} \\
	& {\HH^2(C)}
	\arrow["{j^*}"', from=2-1, to=3-2]
	\arrow["{i^*}", from=2-1, to=1-2]
	\arrow["{\varphi_X}", from=1-2, to=2-3]
	\arrow["{\varphi_Y}"', from=3-2, to=2-3]
\end{tikzcd}\]
is commutative. Denoting with $\eta_B$ and $\eta_C$ the Hochschild cocycles of $B$ and $C$ defining their $cA_\infty$ deformations $B_\varepsilon$ and $C_\varepsilon$, since they map to the same element in $\HH^2(A)$ and all arrows are isomorphisms, there must be an element $[\gamma]\in \HH^2(\mathfrak{c}_Z)$ such that \[i^*([\gamma])=[\eta_\B] \text{ and }j^*([\gamma])=[\eta_\C].\] By \cite[Lemma 3.8]{MoritaDef}, it is actually possible to find a 2-cocycle $\gamma$ mapping to $\eta_\A$ and $\eta_\B$ before passing to cohomology. Now $\gamma$ defines a $cA_\infty$ deformation of $\mathfrak{c}_Z$ which is immediately seen to be itself an arrow category for some $B_\varepsilon$-$C_\varepsilon$ $cA_\infty$ bimodule $Z_\varepsilon$. By construction of $Z_\varepsilon$ it is $\ke$-free and its reduction modulo $\varepsilon$ is the $B$-$C$ Morita bimodule $Z$.

We are almost done, except that we need to rectify $Z_\varepsilon$ to a cdg -- and not $c{A}_\infty$ -- bimodule. This is an application of Koszul duality, for which we employ the notations and constructions of \cite[Sections 6 and 8]{Positselski_2011}\footnote{In principle the constructions there are only given for a base field, but since in our case everything is (graded) $\ke$-free we can repeat verbatim his constructions over this base ring. In particular, all the tensor products in the various bar constructions are intended over $\ke$.}. Using the fact that the (co)bar construction is appropriately monoidal, a $cA_\infty$ $B_\varepsilon$-$C_\varepsilon$ bimodule corresponds to a $cA_\infty$ module over the cdg algebra $E_\varepsilon=\op{B_\varepsilon}\otimes_{\ke} C_\varepsilon$. This by definition is a cdg comodule $\Barv_v(E_\varepsilon, Z_\varepsilon)$ over the cdg coalgebra $\Barv_v(E_\varepsilon)$ (for a similar construction, see the curved bar construction of \cite{Positselski_2018}). Consider then the cdg $E_\varepsilon$-module $\hat{Z_\varepsilon}=E_\varepsilon \otimes^\tau \Barv_v(E_\varepsilon, Z_\varepsilon)$. This is graded $E_\varepsilon$-free -- its underlying graded module is $E_\varepsilon\otimes_{\ke}\Barv_v(E_\varepsilon, Z_\varepsilon)$ -- so it is both graded $C_\varepsilon$-free and graded $B_\varepsilon$-free. Moreover, denoting $E=E_\varepsilon \otimes_{\ke} k \cong \op{B}\otimes_k C$, the reduction $\hat{Z}$ of $\hat{Z_\varepsilon}$ is the dg $E$-module  $E\otimes ^\tau \Barv_v(E, Z)$, where now the tensors are over the base field $k$. By the proof of \cite[Theorem 6.3]{Positselski_2011}, this coincides with the (reduced) bar resolution of the dg $E$-module $M$, which is therefore both cofibrant and, being isomorphic to $Z$ in the derived category of bimodules, a Morita equivalence. 
\end{proof}

\section{Filtered Morita equivalences}
In this section we'll consider deformations not necessarily of the first order, putting ourselves in the generality of \cite{Nder}. For any cdg algebra $\A$, we will denote with $\A\Mod$ the dg category of right cdg $\A$-modules.
\subsection{The $n$-derived category}We begin by recalling some notation and constructions from \cite{Nder}. Let $A$ be a dg algebra, and $A_n$ a cdg deformation of $A_n$ over $R_n=\kn$ (in the sense of \cite{Nder}, i.e. a cdg algebra structure on $A\otimes_k R_n$ which reduces to the algebra structure of $A$). The $n$-derived category $D^n(A_n)$ of $A_n$ is defined as the quotient of the homotopy category $\hot(A_n)=H^0(A_n\Mod)$ of cdg $A_n$-modules by the subcategory given by the modules $M$ for which the associated graded $\Gr_t (M)$ with respect to the $t$-adic filtration is acyclic. The quotient functor $\hot(A_n)\to D^n(A_n)$ has a left adjoint, denoted $\mathbf{p}_n$ and the modules in the essential image of this functor are deemed $n$-homotopy projective. In the uncurved setting, we will denote with $\mathbf{p}$ the left adjoint to the quotient functor $\hot(A)\to D(A)$.
 Let $A$ and $B$ be two dg algebras and $A_n$ and $B_n$ deformations over $R_n$ of $A$ and $B$ respectively. Let $X_n$ be a cdg $A_n$-$B_n$ bimodule. This induces an adjoint pair of dg-functors  \[\begin{tikzcd}
	{A_n\Mod} & {B_n\Mod}
	\arrow["{- \otimes_{A_n} X_n}"', shift right, from=1-1, to=1-2]
	\arrow["{\Hm{B_n}(X_n,-)}"', shift right, from=1-2, to=1-1].
\end{tikzcd}\]
In the following we will assume that $X_n$ is projective as a graded $B_\varepsilon$-module and that the $A$-$B$ bimodule $X=X_n\otimes_{R_n}k$ is cofibrant as a $B$-module. Note that this condition is implied by cofibrancy as a bimodule.
\begin{lemma}\label{homcommutes}
    Let $X_n$ be an $A_n$-$B_n$ bimodule that is projective as a graded $B_n$-module and such that $X=X_n\otimes_{R_n}k$ is cofibrant as a dg $B$-module. Then the functor \[
    \Hm{B_n}(X_n,-)\colon B_n\Mod \to A_n\Mod
    \] preserves $n$-acyclic modules.
\end{lemma}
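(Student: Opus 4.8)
The plan is to compare the two functors on associated graded objects: I will show that forming the associated graded for the $t$-adic filtration carries $\Hm{B_n}(X_n,-)$ to $\Hm{B}(X,-)$, and then conclude using that $X$ is homotopy projective. Recall first that a cdg $B_n$-module $N$ is $n$-acyclic exactly when the complex $\Gr_t^\bullet(N)=\bigoplus_{i=0}^{n} t^iN/t^{i+1}N$, with the differential induced by $d_N$, is acyclic; this differential squares to zero because the curvature $c_{B_n}$ of $B_n$ reduces to $0$ in the dg algebra $B$, hence lies in $tB_n$, so $\Gr_t^\bullet(N)$ is a genuine (uncurved) dg $B$-module.

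First I would note that, since all the differentials involved are $R_n$-linear, the filtration $M\supseteq tM\supseteq\cdots\supseteq t^{n+1}M=0$ induces a filtration on $H:=\Hm{B_n}(X_n,M)$ with $t^iH=\Hm{B_n}(X_n,t^iM)$ for every $i$. This is where graded projectivity of $X_n$ enters: realizing $X_n$ as a direct summand of a graded free module $\bigoplus_\alpha B_n[n_\alpha]$, the functor $\Hm{B_n}(\bigoplus_\alpha B_n[n_\alpha],-)$ is $\prod_\alpha(-)[-n_\alpha]$, on which multiplication by $t^i$ is computed componentwise, and passing to the direct summand $X_n$ gives $t^iH=\Hm{B_n}(X_n,t^iM)$; the same projectivity makes $\Hm{B_n}(X_n,-)$ exact, so the $i$-th graded piece of $H$ is $\Hm{B_n}(X_n,\Gr_t^iM)$. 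Since $\Gr_t^iM$ is killed by $t$, every $B_n$-linear map from $X_n$ into it factors uniquely through $X=X_n\otimes_{R_n}k$ and is automatically $B$-linear, so $\Gr_t^iH\cong\Hm{B}(X,\Gr_t^iM)$ naturally. A short check — using that $X_n$ reduces, as a graded module equipped with its differential, precisely to the dg $B$-module $X$ — shows that the induced differential on $\Gr_t^\bullet H$ matches the Hom-differential, giving an isomorphism of complexes
\[
\Gr_t^\bullet\Hm{B_n}(X_n,M)\;\cong\;\Hm{B}\!\bigl(X,\Gr_t^\bullet M\bigr).
\]

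To finish, assume $M$ is $n$-acyclic, i.e.\ $\Gr_t^\bullet M$ is acyclic. Acyclicity of a dg $B$-module depends only on its underlying complex, so $\Gr_t^\bullet M$ is an acyclic dg $B$-module; since $X$ is cofibrant, hence homotopy projective, as a dg $B$-module, $\Hm{B}(X,\Gr_t^\bullet M)$ is acyclic. By the displayed isomorphism $\Gr_t^\bullet\Hm{B_n}(X_n,M)$ is acyclic, that is, $\Hm{B_n}(X_n,M)$ is $n$-acyclic.

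I expect the only delicate point to be the identification of associated graded objects: one has to check carefully that $\Hm{B_n}(X_n,-)$ commutes with the formation of $\Gr_t^\bullet$ — both the exactness and the control of $t^iH$ come from graded projectivity of $X_n$ — and that the differential induced on the associated graded is exactly the Hom-differential for $X$ and $\Gr_t^\bullet M$, which relies on $X_n$ reducing to the given dg bimodule $X$. Everything else is formal.
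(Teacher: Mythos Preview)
Your proof is correct and follows essentially the same strategy as the paper: show that $\Hm{B_n}(X_n,-)$ commutes with the formation of the subquotients detecting $n$-acyclicity, reduce to $\Hm{B}(X,-)$ via the fact that $t$ acts trivially on these subquotients, and conclude from homotopy projectivity of $X$. The only difference is cosmetic: you work directly with the defining subquotients $\Gr_t^i M = t^iM/t^{i+1}M$, whereas the paper invokes the equivalent characterization from \cite[Proposition~3.2]{Nder} in terms of the pieces $t^i\Ker t^{i+1}_M$ and computes with those instead; both computations hinge on the same identification $t^i\Hm{B_n}(X_n,M)\cong\Hm{B_n}(X_n,t^iM)$ coming from graded projectivity.
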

\begin{proof}
    Let $M$ be a cdg $B_n$-module. Recall that by \cite[Proposition 3.2]{Nder} $M$ is $n$-acyclic if and only if $t^i\Ker t^{i+1}_M$ is acyclic for $i=0, \ldots, n$. We first show that \begin{equation}\label{tiso}
    t^i\Hm{B_n}(X_n, M)\cong \Hm{B_n}(X_n, t^iM).    \end{equation}
There is a natural map \[
    t^i\Hm{B_n}(X_n, M)\to \Hm{B_n}(X_n, t^iM) 
    \] induced by the surjection $M\tow{t^i}t^iM$. This is immediately seen to be injective and, since $X_n$ is projective as a graded $B_n$-module, it is also surjective. Applying now the exact functor $\Hm{B_n}(X_n,-)$ to the short exact sequence \[
    0\to t^{i+1}M\to t^iM \to \Gr^i_t(M)\to 0
    \] and using the isomorphism (\ref{tiso}), we find that there is an isomorphism \[
  \Hm{B}(X, \Gr^i_t(M))\cong \Gr^i_t(\Hm{B_n}(X_n, M)).
    \]

    If $M$ is $n$-acyclic, then $\Gr_t^i(M)$ is acyclic; so since $X$ is cofibrant as a $B$-module, the complex $\Hm{B}(X, \Gr_t^i(M))$ has to be acyclic.
\end{proof}

\begin{remark}
    (The proof of) this Lemma has some non-obvious consequences, most notably the fact that than any $A_n$-module that is projective as a graded $A_n$-module whose reduction is homotopy projective is $n$-homotopy projective. In particular, the $\ke$-module \[
    \ldots \to \ke \tow{\varepsilon}\ke\tow{\varepsilon}\ke \to \ldots 
    \]
    is $1$-homotopy projective.
\end{remark}
As a consequence of this Lemma, we can derive the functor $\Hm{B_n}(X_n,-)$ without the need to take an injective resolution in the second variable  -- morally, we have resolved $X_n$. In particular, any morphism of cdg algebras $A_n\to B_n$ gives to $B_n$ the structure of a cdg $A_n$-$B_n$ bimodule satisfying the hypotheses above. We have then a functor \[
\Hm{B_n}(X_n,-)  \colon D^n(B_n)\tow{}D^n(A_n).
\]
\begin{proposition}\label{derivedadjoints}
    The functor $-\otimes_{A_n}X_n$ admits a left derived functor $-\otimes_{A_n}^{\operatorname{L}}X_n$, giving a derived adjoint pair
    \[\begin{tikzcd}
	{D^n(A_n)} & {D^n(B_n).}
	\arrow["{-\otimes_{A_n}^{\operatorname{L}}X_n}"', shift right, from=1-1, to=1-2]
	\arrow["{\Hm{B_n}(X_n,-) }"', shift right, from=1-2, to=1-1]
\end{tikzcd}\]
\end{proposition}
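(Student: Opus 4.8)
The plan is to construct the left derived functor by means of $n$-homotopy projective resolutions and to read off the adjunction from the preceding Lemma, so that — once that Lemma is available — essentially everything is formal. I will freely use the following facts from \cite{Nder}: writing $Q_A\colon\hot(A_n)\to D^n(A_n)$ for the quotient functor, every object of $D^n(A_n)$ is isomorphic to $Q_A P$ with $P$ an $n$-homotopy projective $A_n$-module (one may take $P=\mathbf{p}_n Q_A M$); an $A_n$-module $P$ is $n$-homotopy projective precisely when it lies in the left-orthogonal complement of the $n$-acyclic modules in $\hot(A_n)$; and for such $P$ one has $\Hom_{D^n(A_n)}(Q_A P, Q_A N)\cong\Hom_{\hot(A_n)}(P,N)$ for every $N$. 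Likewise, since $X_n$ is graded projective (hence graded flat) over $B_n$, the dg functor $X_n\otimes_{A_n}-\colon A_n\Mod\to B_n\Mod$ is exact on underlying graded modules, so it induces a triangulated functor $\hot(A_n)\to\hot(B_n)$; and the module-level adjunction with $\Hm{B_n}(X_n,-)$ is a dg adjunction, hence descends to an adjunction between homotopy categories.

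I would first set $X_n\otimes^{\operatorname{L}}_{A_n}M:=Q_B\bigl(X_n\otimes_{A_n}\mathbf{p}_n Q_A M\bigr)$. An $n$-quasi-isomorphism between $n$-homotopy projective modules has $n$-acyclic cone, which is again $n$-homotopy projective, hence zero in $\hot(A_n)$; so such a map is already a homotopy equivalence, and $X_n\otimes_{A_n}-$ preserves it. The standard argument then turns this pointwise recipe into a well-defined triangulated functor $D^n(A_n)\to D^n(B_n)$, which together with the right adjoint obtained below is the total left derived functor of $X_n\otimes_{A_n}-$.

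The crux — and the step I expect to be the main obstacle, the rest being formal — is that $X_n\otimes_{A_n}-$ sends $n$-homotopy projective $A_n$-modules to $n$-homotopy projective $B_n$-modules. Let $P$ be $n$-homotopy projective over $A_n$ and let $Z$ be any $n$-acyclic $B_n$-module. The descended adjunction gives
\[
\Hom_{\hot(B_n)}\bigl(X_n\otimes_{A_n}P,\,Z\bigr)\;\cong\;\Hom_{\hot(A_n)}\bigl(P,\,\Hm{B_n}(X_n,Z)\bigr).
\]
By the preceding Lemma $\Hm{B_n}(X_n,Z)$ is $n$-acyclic, and any morphism in $\hot(A_n)$ from an $n$-homotopy projective module to an $n$-acyclic one vanishes; hence the left-hand side is $0$. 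Since $Z$ was an arbitrary $n$-acyclic module, $X_n\otimes_{A_n}P$ lies in the left-orthogonal complement of the $n$-acyclics, i.e.\ it is $n$-homotopy projective. This is exactly the point at which the standing hypotheses on $X_n$ (graded projective over $B_n$) and on $X$ (cofibrant over $B$) are used, namely through the Lemma.

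Finally I would assemble the adjunction. Representing an object of $D^n(A_n)$ as $Q_A P$ with $P$ $n$-homotopy projective and an object of $D^n(B_n)$ as $Q_B N$, we have $X_n\otimes^{\operatorname{L}}_{A_n}Q_A P\cong Q_B(X_n\otimes_{A_n}P)$ and natural isomorphisms
\[
\begin{aligned}
\Hom_{D^n(B_n)}\bigl(Q_B(X_n\otimes_{A_n}P),\,Q_B N\bigr)
&\cong\Hom_{\hot(B_n)}\bigl(X_n\otimes_{A_n}P,\,N\bigr)\\
&\cong\Hom_{\hot(A_n)}\bigl(P,\,\Hm{B_n}(X_n,N)\bigr)\\
&\cong\Hom_{D^n(A_n)}\bigl(Q_A P,\,\Hm{B_n}(X,Q_B N)\bigr),
\end{aligned}
\]
using in order: $n$-homotopy projectivity of $X_n\otimes_{A_n}P$ (the crux) together with the defining property of $n$-homotopy projective modules recalled above; the descended dg adjunction; and both $n$-homotopy projectivity of $P$ and the fact (a consequence of the Lemma) that $\Hm{B_n}(X,-)$ is the functor induced on $n$-derived categories by $\Hm{B_n}(X_n,-)$, i.e.\ $\Hm{B_n}(X,-)\circ Q_B\cong Q_A\circ\Hm{B_n}(X_n,-)$. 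Naturality in both variables is routine, and the resulting adjunction exhibits $\Hm{B_n}(X,-)$ as right adjoint to $X_n\otimes^{\operatorname{L}}_{A_n}-$, which concludes the argument.
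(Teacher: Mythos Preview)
Your argument is correct and follows exactly the route of the paper: define $X_n\otimes_{A_n}^{\operatorname{L}}-$ via $\mathbf{p}_n$, and obtain the adjunction from the fact that $\Hm{B_n}(X_n,-)$ preserves $n$-acyclics (the preceding Lemma), which formally forces its left adjoint $X_n\otimes_{A_n}-$ to preserve $n$-homotopy projectives. The paper compresses your ``crux'' paragraph and the final chain of isomorphisms into a single sentence, but the content is identical.
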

\begin{proof}
    We define the left derived functor $-\otimes_{A_n}^{\operatorname{L}}X_n$ as the composition 
\[\begin{tikzcd}
	{D^n (A_n)} & {\hot (A_n)} & {\,\hot(B_n)} & {D^n (B_n).}
	\arrow["{\mathbf{p}_n}", from=1-1, to=1-2]
	\arrow["{-\otimes_{A_n}X_n}", from=1-2, to=1-3]
	\arrow[from=1-3, to=1-4]
\end{tikzcd}\]To show that this is a left adjoint to $\Hm{B_n}(X_n,-)$, one uses the fact that the functor $\Hm{B_n}(X_n,-)$ preserves acyclic objects and consequently its left adjoint $-\otimes_{A_n}X_n$ preserves $n$-homotopy projective modules.
\end{proof}
The bimodule $X_n$ induces $A_i$-$B_i$ bimodules $X_i$ for $i=0, \ldots, n-1$; we set $X_0=X$. The main result of this section is the following:
\begin{proposition}\label{preservation}
       The adjoint pair \[\begin{tikzcd}
	{D^n(A_n)} & {D^n(B_n)}
	\arrow["{-\otimes_{A_n}^{\operatorname{L}}X_n}"', shift right, from=1-1, to=1-2]
	\arrow["{\Hm{B_n}(X_n,-)}"', shift right, from=1-2, to=1-1]
\end{tikzcd}\]
is an equivalence if and only if the $A$-$B$ bimodule $X$ is a Morita bimodule.
\end{proposition}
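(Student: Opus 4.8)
The plan is to reduce the biconditional, in both directions, to the analogous statement for the reduced adjoint pair $\bigl(X\otimes_{A}^{\operatorname{L}}-,\,\Hom_{B}(X,-)\bigr)$ between $D(A)$ and $D(B)$ --- where $\Hom_{B}(X,-)$ already computes the derived functor because $X$ is cofibrant over $B$ --- the bridge being compatibility with the $t$-adic filtration. The underlying observation: by \cite[Proposition 3.2]{Nder} applied to cones, a morphism $f\colon M\to N$ of $R_n$-flat cdg $A_n$-modules is an isomorphism in $D^n(A_n)$ if and only if its reduction $\bar f\colon\overline M\to\overline N$ is a quasi-isomorphism of dg $A$-modules, since then $\Cn(f)$ is again $R_n$-flat and hence $t^i\Ker t^{i+1}_{\Cn(f)}\cong\overline{\Cn(f)}\cong\Cn(\bar f)$ for all $i$.

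Next I would record the key compatibility. Represent an arbitrary object of $D^n(A_n)$ by an $n$-homotopy projective, graded-free (hence $R_n$-free) module $M$; then $X_n\otimes_{A_n}M$ represents $X_n\otimes_{A_n}^{\operatorname{L}}M$ and is graded-projective, hence $R_n$-free, over $B_n$. Since $X_n$ is graded-projective over $B_n$ and $X$ is cofibrant over $B$, the computations in the proof of the Lemma yield, naturally in $M$ and compatibly with the units of the two adjunctions, isomorphisms
\[
t^i\Ker t^{i+1}_{\Hom_{B_n}(X_n,\,X_n\otimes_{A_n}^{\operatorname{L}}M)}\ \cong\ \Hom_{B}\bigl(X,\;t^i\Ker t^{i+1}_{X_n\otimes_{A_n}^{\operatorname{L}}M}\bigr)\ \cong\ \Hom_{B}\bigl(X,\;X\otimes_{A}^{\operatorname{L}}\overline M\bigr),
\]
the last step using that $X_n\otimes_{A_n}(-)$ commutes with $t^i\Ker t^{i+1}_{(-)}$ by graded freeness and that the reduction of an $n$-homotopy projective module is homotopy projective over $A$. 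Combined with the observation above and with $t^i\Ker t^{i+1}_M\cong\overline M$, this shows that the unit $M\to\Hom_{B_n}(X_n,X_n\otimes_{A_n}^{\operatorname{L}}M)$ is an isomorphism in $D^n(A_n)$ if and only if the unit $\overline M\to\Hom_{B}(X,X\otimes_{A}^{\operatorname{L}}\overline M)$ of the reduced adjunction is a quasi-isomorphism; a symmetric discussion on $D^n(B_n)$ identifies, on $t^i\Ker t^{i+1}_{(-)}$, the $n$-level counit with the counit of the reduced adjunction.

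Granting this, the direction ``$X$ Morita $\Rightarrow$ equivalence'' is immediate: if $X\otimes_{A}^{\operatorname{L}}-$ is an equivalence then its unit and counit are isomorphisms, hence so are those of the $n$-level pair, which is therefore an equivalence. For the converse, assume the $n$-level pair is an equivalence. Then $X_n=X_n\otimes_{A_n}^{\operatorname{L}}A_n$ is compact in $D^n(B_n)$, because $A_n$ is compact in $D^n(A_n)$: by the Remark $A_n$ is $n$-homotopy projective, so $\Hom_{D^n(A_n)}(A_n,-)=H^0(-)$, which commutes with the coproducts of $D^n(A_n)$ as these are computed at the level of modules. Writing $\iota$ for the functor sending a dg $B$-module to the cdg $B_n$-module on which $t$ acts as zero, one has natural isomorphisms $\Hom_{D^n(B_n)}(X_n,\iota N)\cong\Hom_{D(B)}(X,N)$ (from $n$-homotopy projectivity of $X_n$ and cofibrancy of $X$), and $\iota$ commutes with coproducts, so $X$ is compact in $D(B)$. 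Now $A$ and $B$ are compact generators of $D(A)$ and $D(B)$, $X\otimes_{A}^{\operatorname{L}}-$ preserves coproducts, and $\Hom_{B}(X,-)$ does too since $X$ is compact; hence to see that $X\otimes_{A}^{\operatorname{L}}-$ is an equivalence it suffices that the reduced unit and counit be isomorphisms on $A$ and $B$ respectively, and this is precisely the second paragraph applied to $M=A_n$ and $M=B_n$, whose reductions are $A$ and $B$ and for which the $n$-level unit and counit are isomorphisms by assumption. Thus $X$ is a Morita bimodule.

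The step I expect to be the main obstacle is the compatibility asserted in the second paragraph: one must verify carefully that the subquotient functors $t^i\Ker t^{i+1}_{(-)}$ intertwine the curved adjoint pair $\bigl(X_n\otimes_{A_n}^{\operatorname{L}}-,\,\Hom_{B_n}(X_n,-)\bigr)$ with the reduced one and that the maps they induce are genuinely its unit and counit. This combines the explicit isomorphisms from the proof of the Lemma with the facts --- standard in the framework of \cite{Nder} but requiring care in the curved setting --- that $\mathbf{p}_n$ produces graded-free modules and that reduction carries $n$-homotopy projective modules to homotopy projective ones; everything else is formal manipulation of adjunctions and compact generation.
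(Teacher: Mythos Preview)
Your forward direction is essentially the paper's: both reduce the $n$-level unit and counit to the reduced ones by passing to subquotients of the $t$-adic filtration (you use $t^i\Ker t^{i+1}$, the paper uses $\Gr_t$, and on $R_n$-free modules these agree), invoking that $\mathbf{p}_n$ produces graded-free modules with homotopy-projective reduction. So that half is fine and matches the paper.

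The converse, however, has a genuine gap. You write ``$A_n$ is compact in $D^n(A_n)$: by the Remark $A_n$ is $n$-homotopy projective, so $\Hom_{D^n(A_n)}(A_n,-)=H^0(-)$'' and later apply the second paragraph to ``$M=A_n$ and $M=B_n$''. But when $A_n$ (resp.\ $B_n$) has nonzero curvature it is \emph{not} a cdg module over itself---the paper recalls this explicitly at the start of Section~2---so $A_n$ is not an object of $D^n(A_n)$, and ``$H^0(-)$'' is meaningless on curved modules since $d^2\neq 0$. The entire compactness detour therefore collapses in the curved case, which is exactly the case of interest.

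The good news is that this detour is unnecessary. Your own second paragraph already gives the converse: if the $n$-level unit is an isomorphism at every $n$-homotopy projective graded-free $M$, then the reduced unit is an isomorphism at every $\overline M$, and every object of $D(A)$ arises as such a reduction (take $M=\mathbf{p}_n(\iota N)$). The paper organizes this more cleanly: rather than lifting and reducing, it uses the fully faithful restriction-of-scalars $F_0\colon D(A)\hookrightarrow D^n(A_n)$ (your $\iota$) and checks by two commuting squares that the $n$-level adjunction restricts along $F_0$ to the reduced one; full faithfulness of $F_0$ then transports the isomorphy of unit and counit back to $D(A)$ and $D(B)$. This avoids any appeal to compact generators, to $A_n$ as a module, or to $H^0$.
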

This is the union of Propositions \ref{pres1} and \ref{pres2}, the proofs of which will take up the rest of the section. Denote with $F_i$ the restriction of scalars $A_i\Mod \to A_n\Mod$ along the projection $A_n\to A_i$.

\begin{lemma}\label{zeroaction}
Let $M$ be an $A_n$-module such that $t^{i+1}M=0$. Then $M$ has a natural structure of $A_i$-module and there is an isomorphism of $A_n$-modules
\[
M\otimes_{A_n}X_n\cong F_i(M\otimes_{A_i}X_i).
\]

\end{lemma}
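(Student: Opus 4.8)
The plan is to prove both assertions by a direct, purely algebraic argument; no homotopical input is needed (in particular, neither standing hypothesis on $X_n$ — graded $B_n$-projectivity, or $B$-cofibrancy of $X$ — plays any role here). First I would equip $M$ with its $A_i$-module structure: since $t$ is central in $A_n=A\otimes_k R_n$ and $t^{i+1}M=0$, the $A_n$-action on $M$ annihilates the two-sided ideal generated by $t^{i+1}$, hence factors through the quotient cdg algebra $A_n\twoheadrightarrow A_n/(t^{i+1})$, which is exactly the deformation $A_i$ over $R_i=k[t]/(t^{i+1})$, with curvature the image of $c_{A_n}$. The curved module identity survives this factoring, because $d_M^2m=c_{A_n}\cdot m=c_{A_i}\cdot m$; thus $M$ is a cdg $A_i$-module, and restricting scalars back along $A_n\to A_i$ returns $M$ with its original structure.

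For the isomorphism, the one point worth isolating is that, $t$ being central, the submodule $t^{i+1}X_n=X_n t^{i+1}$ of $X_n$ is a sub-bimodule and the quotient is annihilated by $t^{i+1}$ from both sides; consequently there is an isomorphism of cdg bimodules $X_n\otimes_{A_n}A_i\cong X_n/t^{i+1}X_n=X_i$, the induced differential and curvature on the quotient being by construction those of the reduced bimodule $X_i$. Using the $A_i$-module structure just produced on $M$, associativity of the graded tensor product then gives
\[
X_n\otimes_{A_n}M\;\cong\;\bigl(X_n\otimes_{A_n}A_i\bigr)\otimes_{A_i}M\;\cong\;X_i\otimes_{A_i}M,
\]
and one checks that these isomorphisms of underlying graded modules intertwine the (Leibniz) differentials. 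Finally, $X_n\otimes_{A_n}M$ is itself annihilated by $t^{i+1}$, since $t^{i+1}(x\otimes m)=x\otimes t^{i+1}m=0$, so it is the restriction of scalars of a module over the reduction; tracking the structures identifies that module with $X_i\otimes_{A_i}M$, which gives the asserted isomorphism $X_n\otimes_{A_n}M\cong F_i(X_i\otimes_{A_i}M)$.

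I do not expect a genuine obstacle here: the statement is formal. The only things demanding (routine) attention are that the module-theoretic identifications above respect the curved differentials, and that the curvature elements match up under the reductions $A_n\to A_i$ and $X_n\to X_i$; keeping this bookkeeping straight — and, importantly, confirming that one obtains honest isomorphisms of cdg modules rather than mere quasi-isomorphisms, which holds because no resolution intervenes — is essentially the whole content of the verification.
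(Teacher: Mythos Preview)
Your proof is correct and is essentially the same argument as the paper's: both reduce to the identification $X_n/(t^{i+1})\cong X_i$ and a formal tensor manipulation. The only cosmetic difference is that you insert $A_i$ on the right via $X_n\otimes_{A_n}M\cong (X_n\otimes_{A_n}A_i)\otimes_{A_i}M$, whereas the paper inserts $B_i$ on the left via $X_n\otimes_{A_n}M\cong B_i\otimes_{B_n}(X_n\otimes_{A_n}M)$ and then invokes the commutative square $B_i\otimes_{B_n}X_n\otimes_{A_n}(-)\cong X_i\otimes_{A_i}A_i\otimes_{A_n}(-)$; these are two phrasings of the same associativity.
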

\begin{proof}
    The first statement follows from the fact that the functor $F_i$ identifies the category $A_i\Mod$ as the full subcategory of $A_n\Mod$ given by the modules $M$ for which $t^{i+1}M=0$. To prove the second, observe that since the functor $F_i$ is fully faithful and the functor $A_i\otimes_{A_n}-$ is left adjoint to $F_i$ there is a natural isomorphism of $A_i$-modules \begin{equation}\label{forgetful}
        M\cong M\otimes_{A_n} A_i;
    \end{equation} we also know that $t^{i+1}M=0$ implies $t^{i+1}(M\otimes_{A_n}X_n)=0$; therefore $M\otimes_{A_n} X_n$ has the structure of a $B_i$-module and we can apply to it the isomorphism (\ref{forgetful}) to obtain a natural isomorphism of $B_i$-modules\[
    M\otimes_{A_n}X_n\cong  M\otimes_{A_n} X_n \otimes_{B_n}B_i.
    \] Since the diagram 
\[\begin{tikzcd}
	{A_n\Mod} & {A_i\Mod} \\
	{B_n\Mod} & {B_i\Mod}
	\arrow["{-\otimes_{A_n}A_i}", from=1-1, to=1-2]
	\arrow["{-\otimes_{A_n}X_n}"', from=1-1, to=2-1]
	\arrow["{-\otimes_{A_i}X_i}", from=1-2, to=2-2]
	\arrow["{-\otimes_{B_n}B_i}", from=2-1, to=2-2]
\end{tikzcd}\] commutes up to natural isomorphism, we have further isomorphisms \[
M\otimes_{A_n}X_n\cong B_i\otimes_{B_n}M\otimes_{A_n}X_n\cong M\otimes_{A_n} A_i \otimes_{A_i} X_i\cong M\otimes_{A_i}X_i
\]of $B_i$-modules and we are done.
\end{proof}
\begin{lemma}\label{compatibility}
    For any $A_n$-module $M$ there is a natural isomorphism
    \[
    \Gr_t(M\otimes_{A_n} X_n)\cong \Gr_t(M)\otimes_A X.
    \] 
    \end{lemma}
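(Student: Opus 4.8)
\emph{Strategy.} I would first reduce the statement to a flatness property: that under the standing hypotheses $X_n$ is flat as a graded $A_n$-module, equivalently $\operatorname{Tor}^{A_n}_{>0}(X_n,-)=0$ (recall that every $A_n$-module is annihilated by $t^{n+1}$). Granting this, the lemma is a short d\'evissage. Applying the now-exact functor $X_n\otimes_{A_n}-$ to the short exact sequences $0\to t^{i+1}M\to t^iM\to\Gr_t^i(M)\to 0$ gives short exact sequences, and since $t$ is central the image of $X_n\otimes_{A_n}t^iM$ in $X_n\otimes_{A_n}M$ is exactly $t^i(X_n\otimes_{A_n}M)$ (as $t^i(x\otimes m)=x\otimes t^im$); hence $X_n\otimes_{A_n}-$ carries the $t$-adic filtration of $M$ to that of $X_n\otimes_{A_n}M$, and on associated graded
\[
\Gr_t^i(X_n\otimes_{A_n}M)\;\cong\;X_n\otimes_{A_n}\big(t^iM/t^{i+1}M\big)\;=\;X_n\otimes_{A_n}\Gr_t^i(M).
\]
Because $\Gr_t^i(M)$ is killed by $t$, Lemma~\ref{zeroaction} identifies the right-hand side naturally with $X\otimes_A\Gr_t^i(M)$; summing over $i$ yields $\Gr_t(X_n\otimes_{A_n}M)\cong X\otimes_A\Gr_t(M)$.

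\emph{The flatness input.} To establish $\operatorname{Tor}^{A_n}_{>0}(X_n,-)=0$: since $X_n$ is graded-projective over $B_n=B\otimes_k R_n$, and the latter is graded $R_n$-free, $X_n$ is graded $R_n$-free. I would then check $X_n\otimes^{\operatorname{L}}_{A_n}A\simeq X$ with the higher homology vanishing: over $A_n$ the reduction $A$ is resolved by a $t$-adic Koszul-type complex alternating multiplication by $t$ and by $t^n$ (using $\operatorname{ann}_{R_n}(t)=(t^n)$ and $\operatorname{ann}_{R_n}(t^n)=(t)$), and tensoring with the graded $R_n$-free $X_n$, where $\ker(t)=t^nX_n$ and $\ker(t^n)=tX_n$, kills the higher terms. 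By associativity of the derived tensor product, $X_n\otimes^{\operatorname{L}}_{A_n}\bar M\simeq X\otimes^{\operatorname{L}}_A\bar M$ for every $A$-module $\bar M$, and this equals $X\otimes_A\bar M$ because $X$ is cofibrant, hence $K$-flat, over $A$. Thus $\operatorname{Tor}^{A_n}_{>0}(X_n,\bar M)=0$ for $A$-modules $\bar M$; filtering an arbitrary $A_n$-module by powers of $t$ (whose subquotients are $A$-modules) and running the Tor long exact sequences extends this to all $A_n$-modules.

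\emph{Expected obstacle.} The delicate step is the flatness input, specifically the identification $X_n\otimes^{\operatorname{L}}_{A_n}A\simeq X$: since $A_n$ is genuinely curved it is not a cdg module over itself, so the Koszul resolution above must be realized through a (reduced) bar construction rather than naively, along the lines of the Koszul-duality bookkeeping already used for Proposition~\ref{injective}. The use of cofibrancy of $X$ over the reduction $A$ is also essential here (this is the point at which one wants the bimodule cofibrancy of the remark, not merely one-sided cofibrancy). Everything downstream — the d\'evissage and the two invocations of Lemma~\ref{zeroaction} — is formal.
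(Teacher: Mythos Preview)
Your route is different from the paper's. The paper never establishes flatness; it argues directly by first asserting $t^i(X_n\otimes_{A_n}M)\cong X_n\otimes_{A_n}t^iM$ ``by definition of the action of $t^i$'', then invoking Lemma~\ref{zeroaction} to rewrite this as $X_{n-i}\otimes_{A_{n-i}}t^iM$, and finally applying the right-exact reduction $B\otimes_{B_{n-i}}(-)$ together with the base-change square to reach $X\otimes_A(t^iM/t^{i+1}M)$. Your flatness-first d\'evissage is more systematic and makes the exactness that is being used explicit.

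You are right to flag the need for an $A$-side hypothesis: under the paper's standing assumptions (graded $B_n$-projectivity of $X_n$ and $B$-cofibrancy of $X$) alone, $X_n$ is \emph{not} flat over $A_n$ in general, and indeed the lemma itself can fail. Take $A=k[x]/(x^2)$ with zero differential, $B=k$, the trivial deformations $A_1=A\otimes k[\varepsilon]$ and $B_1=k[\varepsilon]$, and $X_1=k[\varepsilon]$ with right $A_1$-action through $x\mapsto 0$. Then $X_1$ is free over $B_1$ and $X=k$ is cofibrant over $B=k$, but for $M=A_1/(x-\varepsilon)$ one computes $X_1\otimes_{A_1}M=k$ with trivial $\varepsilon$-action, so $\Gr_t(X_1\otimes_{A_1}M)=k$, whereas $\Gr_t(M)=k\oplus k$ with trivial $x$-action and hence $X\otimes_A\Gr_t(M)=k\oplus k$. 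The paper's displayed isomorphism $t^i(X_n\otimes_{A_n}M)\cong X_n\otimes_{A_n}t^iM$ is exactly where its argument breaks in this example: in general there is only a surjection $X_n\otimes_{A_n}t^iM\twoheadrightarrow t^i(X_n\otimes_{A_n}M)$. So your instinct to require bimodule cofibrancy (hence graded $A$-projectivity of $X$) is the correct fix, for both approaches.

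One simplification on your side: the ``expected obstacle'' about curvature is a red herring. Flatness of $X_n$ over $A_n$ is a statement about underlying graded modules, so the curvature plays no role and no bar construction is needed. The alternating complex $\cdots\to A_n\xrightarrow{t^n}A_n\xrightarrow{t}A_n\to A\to 0$ is a genuine graded-projective resolution of $A$ over $A_n$ (it is the resolution of $k$ over $R_n$, tensored with $A$ over $k$); tensoring with the $R_n$-free $X_n$ immediately gives $\operatorname{Tor}^{A_n}_{>0}(X_n,A)=0$, and then the change-of-rings spectral sequence together with graded flatness of $X$ over $A$ yields $\operatorname{Tor}^{A_n}_{>0}(X_n,\bar M)=0$ for every $A$-module $\bar M$, whence for every $A_n$-module by your filtration argument.
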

    \begin{proof}
    We prove that there are isomorphisms 
     \[
\Gr_t^i(M\otimes_{A_n}X_n)=\frac{t^i(M\otimes_{A_n}X_n)}{t^{i+1}(M\otimes_{A_n}X_n)}\cong  \frac{t^iM}{t^{i+1}M} \otimes_A X=\Gr_t^i(M)\otimes_A X
    \] for each $i$.
   By definition of the action of $t^i$ on $M\otimes_{A_n}X_n$, we have \[
    t^i(M\otimes_{A_n}X_n)\cong  t^iM\otimes_{A_n}X_n. 
    \]
    so by Lemma \ref{zeroaction} there is an isomorphism \[
    t^i(M\otimes_{A_n}X_n)\cong t^iM\otimes_{A_{n-i}}X_{n-i} . 
    \]Since the diagram 
\[\begin{tikzcd}
	{A_{n-i}\Mod} & A\Mod \\
	{B_{n-i}\Mod} & B\Mod
	\arrow["{-\otimes_{A_{n-i}}A}", from=1-1, to=1-2]
	\arrow["{-\otimes_{A_{n-i}}X_{n-i}}"', from=1-1, to=2-1]
	\arrow["{-\otimes_{A}X}", from=1-2, to=2-2]
	\arrow["{-\otimes_{B_{n-i}}B}", from=2-1, to=2-2]
\end{tikzcd}\] commutes up to natural isomorphism, we get isomorphisms \[\begin{split}
 \frac{t^i(M\otimes_{A_n}X_n)}{t^{i+1}(M\otimes_{A_n}X_n)}&\cong B \otimes_{B_{n-i}}t^i(M\otimes_{A_n}X_n) \cong
 t^iM \otimes_{A_{n-i}} X_{n-i} \otimes_{B_{n-i}}B
 \\
 &\cong  t^iM \otimes_{A_{n-i}} A \otimes_A X    \cong \frac{t^iM}{t^{i+1}M} \otimes_A X .
\end{split}\]
     \end{proof}
\begin{proposition}\label{pres1}
    If $X$ is a Morita bimodule, then
\[\begin{tikzcd}
	{D^n(A_n)} & {D^n(B_n).}
	\arrow["{-\otimes_{A_n}^{\operatorname{L}}X_n}"', shift right, from=1-1, to=1-2]
	\arrow["{\Hm{B_n}(X_n,-)}"', shift right, from=1-2, to=1-1]
\end{tikzcd}\]
is an equivalence.
\end{proposition}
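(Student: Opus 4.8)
The plan is to prove that both the unit $\eta\colon\id\Rightarrow\Hm{B_n}(X_n,-)\circ(X_n\otimes^{\operatorname{L}}_{A_n}-)$ and the counit $\epsilon\colon(X_n\otimes^{\operatorname{L}}_{A_n}-)\circ\Hm{B_n}(X_n,-)\Rightarrow\id$ of the adjoint pair are isomorphisms; this suffices since both functors are triangulated (the right adjoint because it preserves $n$-acyclic modules, the left one because it is built in Proposition \ref{derivedadjoints} as a composite of exact functors). The basic mechanism is that a morphism $f$ in $D^n$ is an isomorphism exactly when $\Cn(f)$ is $n$-acyclic, i.e.\ when $\Gr_t^i(f)$ is a quasi-isomorphism for every $i$ (by \cite[Proposition 3.2]{Nder} and the definition of $n$-acyclicity, using that $\Gr_t$ commutes with mapping cones of chain maps because the $t$-action on a cone is diagonal). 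So I will compute $\Gr_t$ of $\eta$ and of $\epsilon$ and reduce to the hypothesis that $X$ is a Morita bimodule, using Lemma \ref{compatibility}, namely $\Gr_t^i(X_n\otimes_{A_n}M)\cong X\otimes_A\Gr_t^i M$, together with its exact analogue $\Gr_t^i\Hm{B_n}(X_n,N)\cong\Hm{B}(X,\Gr_t^i N)$, which is extracted from the proof of the Lemma preceding Proposition \ref{derivedadjoints} in the same fashion.

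For the unit, take $M$ to be $n$-homotopy projective; by \cite{Nder} we may assume $M$ is graded projective over $A_n$ with homotopy projective reduction, so that each $\Gr_t^i M$ is homotopy projective over $A$. Then $X_n\otimes_{A_n}M$ computes $X_n\otimes^{\operatorname{L}}_{A_n}M$, no derivation of $\Hm{B_n}(X_n,-)$ is needed, and $\eta_M$ is represented by the ordinary tensor--hom unit $M\to\Hm{B_n}(X_n,X_n\otimes_{A_n}M)$. Applying $\Gr_t^i$ and the two compatibility isomorphisms identifies $\Gr_t^i(\eta_M)$ with the map $\Gr_t^i M\to\Hm{B}(X,X\otimes_A\Gr_t^i M)$; since $\Gr_t^i M$ is homotopy projective and $X$ is cofibrant as a $B$-module, this is precisely the unit of the adjunction $X\otimes^{\operatorname{L}}_A-\dashv\operatorname{R}\Hm{B}(X,-)$ at $\Gr_t^i M$, which is an isomorphism because $X$ is a Morita bimodule. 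Hence $\Gr_t(\eta_M)$ is a quasi-isomorphism and $\eta_M$ is an isomorphism in $D^n(A_n)$; in particular $X_n\otimes^{\operatorname{L}}_{A_n}-$ is fully faithful.

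For the counit I will avoid any flatness assumption on $X$ over $A$ by first showing $\Hm{B_n}(X_n,-)\colon D^n(B_n)\to D^n(A_n)$ is conservative; being triangulated, it is enough to see it reflects zero objects. If $\Hm{B_n}(X_n,N)$ is $n$-acyclic, then $\Hm{B}(X,\Gr_t^i N)$ is acyclic for every $i$ by the $\Hom$-analogue of Lemma \ref{compatibility}; since $X$ is cofibrant over $B$ this complex computes $\operatorname{R}\Hm{B}(X,\Gr_t^i N)$, and since a Morita bimodule is a compact generator of $D(B)$ — being the image of the free module $A$ under the equivalence $X\otimes^{\operatorname{L}}_A-$ — each $\Gr_t^i N$ is acyclic, so $N$ is $n$-acyclic, i.e.\ zero in $D^n(B_n)$. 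Now the triangle identity $\Hm{B_n}(X_n,\epsilon_N)\circ\eta_{\Hm{B_n}(X_n,N)}=\id$ together with $\eta$ being an isomorphism shows $\Hm{B_n}(X_n,\epsilon_N)$ is an isomorphism, hence so is $\epsilon_N$. Therefore the adjoint pair is an equivalence.

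The most delicate point is the bookkeeping in the unit computation: one must be sure that, on the chosen model of $M$, passing to the associated graded genuinely produces the derived functors over $A$ and $B$ and that the resulting map is literally the Morita unit rather than merely something quasi-isomorphic to it — this is exactly why the model is taken graded projective with homotopy projective reduction and why cofibrancy of $X$ over $B$ is used. The only input from outside the excerpt is the classical fact that a Morita bimodule is a compact generator, which is what makes the conservativity argument, and hence the counit, go through without any hypothesis on $X$ as a right $A$-module.
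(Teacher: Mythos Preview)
Your proof is correct. For the unit you proceed essentially as the paper does: replace $M$ by an $n$-homotopy projective model, apply $\Gr_t$, and use Lemma~\ref{compatibility} (together with its $\Hom$-analogue, which you correctly extract from the preceding lemma) to identify the result with the unit of the derived $X$-adjunction between $D(A)$ and $D(B)$; the paper phrases this via the roof through $\mathbf{p}_nM$ and cites \cite[Lemma 5.6]{Nder} for homotopy projectivity of $\Gr_t(\mathbf{p}_nM)$, which is exactly the property you invoke when you say each $\Gr_t^iM$ is homotopy projective.

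Where you diverge is the counit. The paper simply declares the argument ``analogous'', meaning one repeats the $\Gr_t$ computation on the composite $X_n\otimes_{A_n}\mathbf{p}_n\Hm{B_n}(X_n,N)\to N$. You instead prove that $\Hm{B_n}(X_n,-)$ is conservative on $D^n(B_n)$ --- using the $\Hom$-version of Lemma~\ref{compatibility} and the fact that a Morita bimodule is a compact generator of $D(B)$ --- and then conclude via the triangle identity. This is a genuinely different route: it is cleaner in that it avoids unwinding $\Gr_t$ of the counit (which involves an extra $\mathbf{p}_n$ in the middle), and it makes explicit that no cofibrancy or flatness of $X$ on the $A$-side is needed. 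The paper's approach, on the other hand, keeps the two halves symmetric and does not appeal to compact generation. Both are valid; yours is arguably more robust.
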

\begin{proof}
    Recall that by definition, $ {M\otimes_{A_n}^{\operatorname{L}}}X_n={\mathbf{p}_nM\otimes_{A_n}^{\operatorname{}}} X_n$. We want to show that the unit and counit of the derived adjunction are isomorphisms; the unit is a morphism in $D^n(A_n)$ \[M\to \Hm{B_n}(X_n,\mathbf{p}_nM \otimes_{A_n}  X_n)\] and the counit a morphism in $D^n(B_n)$ \[ \mathbf{p}_n\Hm{B_n}(X_n,N) \otimes_{A_n} X_n  \to N;\]
    The unit is represented by the roof \[
    M \leftarrow \mathbf{p}_nM \to \Hm{B_n}(X_n,\mathbf{p}_nM\otimes_{A_n} X_n)
    \]where the left map is the canonical map $\mathbf{p}_nM \to M$ and the right is induced by the unit of the underived adjunction between $\Hm{B_n}(X_n,-)$ and ${-\otimes_{A_n}X_n}$;
    the counit is given by the composition \[
    \mathbf{p}_n \Hm{B_n}(X_n,N)\otimes_{A_n}X_n   \to   \Hm{B_n}(X_n,N) \otimes_{A_n}X_n \to N,
    \] where the first morphism is induced by $\mathbf{p}_n \Hm{B_n}(X_n,N)  \to \Hm{B_n}(X_n,N)$ and the second is the counit of the underived adjunction.
    To see that a morphism $ X \to Y$ is an isomorphism in the $n$-derived category is is sufficient to check that $\Gr_t(X)\to \Gr_t(Y)$ is a an isomorphism in $D(A)$; the morphism induced on the associated graded by the unit is the roof

\begin{equation}\label{roofs}
    \begin{split}
&\Gr_t(M) \leftarrow \Gr_t(\mathbf{p}_nM) \to \Gr_t(\Hm{B_n}(X_n,\mathbf{p}_nM\otimes_{A_n} X_n))\cong\\ &\cong  \Hm{B}(X, \Gr_t(\mathbf{p}_nM\otimes_{A_n} X_n))\overset{}{\cong} \Hm{B}(X, \Gr_t(\mathbf{p}_nM\otimes_A X)\end{split}\end{equation}
where the first isomorphism is due to (the proof of) Lemma \ref{homcommutes} while the second is Lemma \ref{compatibility}.
Now, since by \cite[Lemma 5.6]{Nder}
 $\Gr_t(\mathbf{p}_nM) \to \Gr_t(M)$ gives a homotopy projective resolution of the $A$-module $\Gr_t(M)$, this is nothing but the unit of the adjunction induced by $X$ between $D(A)$ and $D(B)$ applied to $\Gr_t(M)$: since that adjunction is an equivalence, (\ref{roofs}) is an isomorphism. The argument for the case of the counit is analogous.
\end{proof}
\begin{proposition}\label{pres2}
   If \[\begin{tikzcd}
	{D^n(A_n)} & {D^n(B_n).}
	\arrow["{-\otimes_{A_n}^{\operatorname{L}}X_n}"', shift right, from=1-1, to=1-2]
	\arrow["{\Hm{B_n}(X_n,-)}"', shift right, from=1-2, to=1-1]
\end{tikzcd}\] is an equivalence, then $X$ is a Morita bimodule.
\end{proposition}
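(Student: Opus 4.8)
The plan is to realize the adjunction $\bigl(X\otimes_A^{\operatorname{L}}-,\ \Hm{B}(X,-)\bigr)$ between $D(A)$ and $D(B)$ as the ``reduction'' of the given $n$-derived adjunction, so that invertibility of the unit and counit of the latter forces invertibility of those of the former.

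First I would introduce a reduction functor. Since $A_n$ reduces to $A$, its curvature lies in $tA_n$, so any cdg $A_n$-module $M$ with $tM=0$ has $d_M^2=0$ and is a genuine dg $A$-module; moreover for arbitrary $M$ the quotient $M/tM=\Gr_t^0(M)$ is a dg $A$-module, functorially in $M$. The assignment $M\mapsto M/tM$ is additive and preserves homotopies and mapping cones, and it sends $n$-acyclic modules to acyclic ones, because $\Gr_t(M)=\bigoplus_{i=0}^n\Gr_t^i(M)$ is a direct sum of complexes, so if it is acyclic then so is $\Gr_t^0(M)=M/tM$. Hence it descends to a triangulated functor $\rho_A\colon D^n(A_n)\to D(A)$, and likewise $\rho_B\colon D^n(B_n)\to D(B)$. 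Restriction of scalars along $A_n\to A$ embeds $A\Mod$ into $A_n\Mod$ as the modules killed by $t$, sends acyclics to $n$-acyclics, and is a section of $\rho_A$ up to isomorphism; in particular $\rho_A$ (and $\rho_B$) is essentially surjective.

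Next I would establish the natural isomorphisms
\[
\rho_B\bigl(X_n\otimes_{A_n}^{\operatorname{L}}M\bigr)\;\cong\;X\otimes_A^{\operatorname{L}}\rho_A(M),\qquad
\rho_A\bigl(\Hm{B_n}(X_n,N)\bigr)\;\cong\;\Hm{B}(X,\rho_B(N)),
\]
for $M\in D^n(A_n)$ and $N\in D^n(B_n)$. For the first, $X_n\otimes_{A_n}^{\operatorname{L}}M=X_n\otimes_{A_n}\mathbf{p}_nM$, whose reduction $\Gr_t^0(X_n\otimes_{A_n}\mathbf{p}_nM)$ is isomorphic to $X\otimes_A\Gr_t^0(\mathbf{p}_nM)$ by the degree-zero part of Lemma \ref{compatibility}; and $\Gr_t^0(\mathbf{p}_nM)=\mathbf{p}_nM/t\mathbf{p}_nM\to M/tM$ is a homotopy projective resolution of $\rho_A(M)$ by \cite[Lemma 5.6]{Nder}, so the right-hand side computes $X\otimes_A^{\operatorname{L}}\rho_A(M)$. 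For the second, graded projectivity of $X_n$ over $B_n$ makes $\Hm{B_n}(X_n,-)$ exact on graded modules, so $\rho_A\bigl(\Hm{B_n}(X_n,N)\bigr)\cong\Hm{B_n}(X_n,N/tN)$; as $N/tN$ is annihilated by $t$, a $B_n$-linear map out of $X_n$ into it factors through $X=X_n/tX_n$, so $\Hm{B_n}(X_n,N/tN)\cong\Hm{B}(X,N/tN)$, which already computes the derived hom because $X$ is cofibrant over $B$. (These $\Hom$-identities are obtained exactly as in the Lemma preceding Proposition \ref{derivedadjoints}.)

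The remaining and most delicate step is to verify that under these identifications $\rho_A$ and $\rho_B$ intertwine the unit and counit of the $n$-derived adjunction with the unit and counit of the adjunction induced by $X$. This is carried out as in the proof of Proposition \ref{pres1}: both adjunctions are obtained by deriving the evident underived tensor--hom adjunctions, whose units and counits are given by the standard explicit formulas and visibly commute with reduction; concretely, the derived unit at $M$ is represented by the roof $M\leftarrow\mathbf{p}_nM\to\Hm{B_n}(X_n,X_n\otimes_{A_n}\mathbf{p}_nM)$, and applying $\rho_A$ produces the roof $\rho_A(M)\leftarrow\mathbf{p}_nM/t\mathbf{p}_nM\to\Hm{B}(X,X\otimes_A(\mathbf{p}_nM/t\mathbf{p}_nM))$, which -- since $\mathbf{p}_nM/t\mathbf{p}_nM$ is a valid homotopy projective resolution of $\rho_A(M)$ -- represents the derived unit of the $X$-adjunction at $\rho_A(M)$; the counit is dual. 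Granting this: if the $n$-derived adjunction is an equivalence its unit and counit are isomorphisms, hence so are the unit and counit of the $X$-adjunction on every object in the image of $\rho_A$, respectively $\rho_B$, and therefore -- by essential surjectivity and naturality -- everywhere. Thus $X\otimes_A^{\operatorname{L}}-\colon D(A)\to D(B)$ is an equivalence, i.e.\ $X$ is a Morita bimodule.
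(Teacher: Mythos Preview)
Your argument is correct, and its backbone---showing that the unit and counit of the $X$-adjunction on $D(A)$, $D(B)$ are the ``images'' of the unit and counit of the $n$-derived adjunction---is the same as the paper's. The difference is the direction in which you transport: the paper uses the fully faithful restriction-of-scalars embedding $F_0\colon D(A)\hookrightarrow D^n(A_n)$, notes that $F_0$ intertwines both $\mathbf{p}$ with $\mathbf{p}_n$ and $X\otimes_A-$ with $X_n\otimes_{A_n}-$ (the latter by Lemma~\ref{zeroaction}), and then invokes full faithfulness of $F_0$ to reflect the isomorphisms back to $D(A)$. You instead go the other way, via the reduction $\rho_A=\Gr_t^0\colon D^n(A_n)\to D(A)$, and use its essential surjectivity. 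The paper's route is shorter: once $F_0$ is known to commute with all the relevant functors, full faithfulness finishes the argument in one line, whereas you must explicitly verify that $\rho_A$ carries the derived unit to the derived unit (which you do correctly, by the same roof manipulation as in Proposition~\ref{pres1}) and likewise for the counit. On the other hand your approach makes the intertwining isomorphisms $\rho_B(X_n\otimes_{A_n}^{\operatorname{L}}-)\cong X\otimes_A^{\operatorname{L}}\rho_A(-)$ and $\rho_A\Hm{B_n}(X_n,-)\cong\Hm{B}(X,\rho_B(-))$ explicit, which is a nice byproduct.
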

\begin{proof}
    The diagrams
\[\begin{tikzcd}
	{D(A)} & {\hot(A)} & A\Mod & B\Mod \\
	{D^n(A_n)} & {\hot(B_n)} & {A_n\Mod} & {B_n\Mod}
	\arrow["{\mathbf{p}}", from=1-1, to=1-2]
	\arrow["{F_0}"', from=1-1, to=2-1]
	\arrow["{F_0}", from=1-2, to=2-2]
	\arrow["{-\otimes_A X}", from=1-3, to=1-4]
	\arrow["{F_0}"', from=1-3, to=2-3]
	\arrow["{F_0}"', from=1-4, to=2-4]
	\arrow["{\mathbf{p}_n}"', from=2-1, to=2-2]
	\arrow["{-\otimes_{A_n}X_n}", from=2-3, to=2-4]
\end{tikzcd}\]
commute, the first due to the fact that any homotopy projective $A$-module is $n$-homotopy projective as an $A_n$-module while the second is Lemma \ref{zeroaction}. Therefore the unit and counit of the adjunction between $D(A)$ and $D(B)$ induced by $X$ coincide with the unit and counit of the adjunction induced between $D^n(A_n)$ and $D^n(B_n)$ by $X_n$, applied to the modules in the image of $F_0$. Since $F_0$ is fully faithful and the latter are isomorphisms, so is the former and we are done.
\end{proof}
\printbibliography
\end{document}